\newtheorem{Thm}{Theorem} 
\newtheorem{Lem}[Thm]{Lemma}
\newtheorem{Prop}[Thm]{Proposition}
\newtheorem{Con}[Thm]{Conjecture}
\theoremstyle{definition}
\numberwithin{equation}{section}
\newenvironment{sm}{\bigl(\begin{smallmatrix}}{\end{smallmatrix}\bigr)}
\renewcommand{\phi}{\varphi}
\newcommand{\C}{\mathrm{C}}
\newcommand{\N}{\mathrm{N}}
\newcommand{\Z}{\mathrm{Z}}
\newcommand{\pcore}{\mathrm{O}}
\newcommand{\ZZ}{\mathbb{Z}}
\newcommand{\QQ}{\mathbb{Q}}
\newcommand{\FF}{\mathbb{F}}
\newcommand{\Aut}{\operatorname{Aut}}
\newcommand{\GL}{\operatorname{GL}}
\newcommand{\SL}{\operatorname{SL}}
\newcommand{\PSL}{\operatorname{PSL}}
\newcommand{\PGL}{\operatorname{PGL}}
\newcommand{\Irr}{\operatorname{Irr}}
\newcommand{\IBr}{\operatorname{IBr}}
\newcommand{\CF}{\mathcal{F}}
\newcommand{\TT}{\mathrm{t}}
\title{Real blocks with dihedral defect groups revisited}
\author{Benjamin Sambale\footnote{Institut für Algebra, Zahlentheorie und Diskrete Mathematik, Leibniz Universität Hannover, Welfengarten 1, 30167 Hannover, Germany,
\href{mailto:sambale@math.uni-hannover.de}{sambale@math.uni-hannover.de}}}
\date{\today}
\begin{document}
\frenchspacing
\maketitle
\begin{abstract}\noindent
The Frobenius--Schur indicators of characters in a real $2$-block with dihedral defect groups have been determined by Murray. We show that two infinite families described in his work do not exist and we construct examples for the remaining families. We further present some partial results on Frobenius--Schur indicators of characters in other tame blocks.
\end{abstract}

\textbf{Keywords:} real blocks; dihedral defect groups; Frobenius--Schur indicator\\
\textbf{AMS classification:} 20C15, 20C20

\renewcommand{\sectionautorefname}{Section}
\section{Introduction}

Finite groups with dihedral Sylow $2$-subgroups were fully classified by Gorenstein--Walter \cite{GW0,GW,GW2,GW3} (an alternate proof was given by Bender~\cite{BenderD,BenderGlauberman}). The principal $2$-blocks of such groups were investigated by Brauer~\cite{BrauerApp3}, Erdmann~\cite{ErdmannDprincipal},  Landrock~\cite{LandrockDprincipal} and recently Koshitani--Lassueur~\cite{KoshitaniL}. As a natural next step, it is desirable to understand arbitrary blocks $B$ of finite groups $G$ with dihedral defect groups $D$ of order $2^d\ge 4$. Brauer~\cite{Brauer} has shown that the number of irreducible characters in $B$ is $k(B)=2^{d-2}+3$, where four of them have height $0$ and the remaining characters have height $1$. On the other hand, the number of simple modules in $B$ is $l(B)=1$, $2$ or $3$ depending on three different fusion patterns (if $B$ is the principal block, the fusion patterns are distinguished by the number of conjugacy classes of involutions: there are three, two or only one such class respectively). Based on Brauer's computations, Cabanes--Picaronny~\cite{Cabanes} have constructed perfect isometries between blocks with dihedral defect groups and the same fusion pattern. 

The algebra structure of $B$ was first investigated for solvable groups $G$ by Erdmann--Michler~\cite{ErdmannMichler} and Koshitani~\cite{KoshitaniDsolvable}. The general case of arbitrary groups was subsequently studied by Donovan~\cite{DonovanD} and by Erdmann~\cite{ErdmannD,Erdmann} in the framework of tame algebras. Some of the algebras with two simple modules described by Erdmann were not known to occur as block algebras. For $l(B)=3$, Linckelmann~\cite{LinckelmannDerived} has lifted the perfect isometries constructed by Cabanes--Picaronny to derived equivalences. This applies in particular to Klein four defect groups (i.\,e. $d=2$) were one has stronger results by Linckelmann~\cite{LinckelmannC2C2} and Craven--Eaton--Kessar--Linckelmann~\cite{CEKL} on the source algebra of $B$ (the case $l(B)=2$ does not occur here). The derived equivalence classes for $l(B)=2$ were later found by Holm~\cite{HolmPub}.
The possible Morita equivalence classes in this situation were restricted by Bleher~\cite{Bleher0,Bleher} and Bleher--Llosent--Schaefer~\cite{Bleher2} using universal deformation rings. Thereafter, Eisele~\cite{EiseleD} proved that certain scalars in Erdmann's description of the basic algebra cannot arise for $l(B)=2$.
Finally, using the classification of finite simple groups, a complete list of all Morita equivalence classes of blocks with dihedral defect groups was given recently by Macgregor~\cite{Macgregor}.

Some questions on blocks cannot even be answered when the Morita equivalence class is known. For instance, if $B$ contains real characters $\chi\in\Irr(B)$, it is of interest to determine their Frobenius--Schur indicators (F-S indicators for short) 
\[\epsilon(\chi):=\frac{1}{|G|}\sum_{g\in G}\chi(g^2)\]
in terms of $D$. Since $\chi(g^2)$ can only be non-zero when the square of the $2$-part of $g$ is conjugate to an element in $D$, it is plausible that $\epsilon(\chi)$ actually depends on an extension $E$ of $D$ such that $|E:D|=2$. 
In fact, Murray~\cite{MurraySubpairs} has described the F-S indicators when $D$ is a dihedral group using the decomposition matrix and the so-called extended defect group of $B$. It is however not clear which combinations of these ingredients can actually occur. 
The aim of this note is to eliminate two infinite families of Murray's classification and construct explicit examples for the remaining cases.

\begin{Thm}\label{main}
Let $B$ be real block of a finite group $G$ with dihedral defect group $D$ of order $2^d\ge 8$ and extended defect group $E$. Let $\epsilon_1,\ldots,\epsilon_4$ be the F-S indicators of the four irreducible characters of height $0$ in $B$. 
There is a unique family of $2$-conjugate characters of height $1$ in $\Irr(B)$ of size $2^{d-3}$. Let $\mu$ be the common F-S indicator of those characters. The possible values for $\epsilon_1,\ldots,\epsilon_4,\mu$ are given as follows, while the remaining $2^{d-3}-1$ characters \textup(of height $1$\textup) all have F-S indicator $1$:
\begin{center}
\begin{tabular}{llll}
Morita equivalence class&$l(B)$&$E$&$\epsilon_1,\ldots,\epsilon_4;\mu$\\\hline
$D$ \textup(nilpotent\textup)&$1$&$D$, $D\times C_2$&$1,1,1,1;1$\\
&&$D*C_4$&$1,1,1,1;-1$\\
&&$D_{2^{d+1}}$&$0,0,1,1;1$\\
&&$SD_{2^{d+1}}$&$0,0,1,1;-1$\\
&&$C_{2^{d-1}}\rtimes C_2^2$, $d\ge 4$&$1,1,1,1;0$\\\hline
$\PGL(2,q)$, $|q-1|_2=2^{d-1}$&$2$&$D$, $D\times C_2$&$1,1,1,1;1$\\
&&$C_{2^{d-1}}\rtimes C_2^2$, $d\ge 4$&$1,1,1,1;0$\\\hline
$\PGL(2,q)$, $|q+1|_2=2^{d-1}$&$2$&$D$, $D\times C_2$&$1,1,1,1;1$\\\hline

$\PSL(2,q)$, $|q-1|_2=2^d$&$3$&$D$, $D\times C_2$&$1,1,1,1;1$\\
&&$D_{2^{d+1}}$&$0,0,1,1;1$\\
&&$SD_{2^{d+1}}$&$0,0,1,1;-1$\\
&&$C_{2^{d-1}}\rtimes C_2^2$, $d\ge 4$&$1,1,1,1;0$\\\hline
$\PSL(2,q)$, $|q+1|_2=2^d$&$3$&$D$, $D\times C_2$&$0,0,1,1;1$\\
&&$D_{2^{d+1}}$&$1,1,1,1;1$\\\hline
$A_7$, $d=3$&$3$&$D$, $D\times C_2$&$1,1,1,1;1$
\end{tabular}
\end{center}
All cases occur for all $d$ as indicated.
\end{Thm}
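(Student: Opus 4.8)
The plan is to combine Murray's parametrization of the Frobenius--Schur indicators with Macgregor's classification of Morita equivalence classes, to rule out two of the resulting candidates by a global argument, and to realize all the others by explicit examples. Murray~\cite{MurraySubpairs} expresses each $\epsilon(\chi)$, $\chi\in\Irr(B)$, through the decomposition matrix of $B$ together with the extended defect group $E$. By Brauer's work~\cite{Brauer} that decomposition matrix is determined up to the fusion pattern, hence by $l(B)$, and by Macgregor~\cite{Macgregor} the Morita equivalence class of $B$ is one of the six types appearing in the table. Substituting this into Murray's formulas produces, for every pair $(l(B),E)$ that his local recipe does not already exclude, a single candidate tuple $(\epsilon_1,\dots,\epsilon_4;\mu)$; the formulas also single out the unique family of $2^{d-3}$ $2$-conjugate height-$1$ characters, which carry $\mu$, and show that the remaining $2^{d-3}-1$ characters of height $1$ are orthogonal. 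The rows of the table are these candidates, apart from two further candidates that have to be discarded.

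To discard the two spurious pairs $(l(B),E)$ I would pass to the index-$2$ overgroup $\hat G\supseteq G$ occurring in the definition of $E$ and to the block $\hat B$ of $\hat G$ lying over $B$, so that $\hat B$ has defect group $E$; here $E$ is $D_{2^{d+1}}$, $SD_{2^{d+1}}$, $D*C_4$ or $C_{2^{d-1}}\rtimes C_2^2$, and for all four groups the possible blocks are known --- Brauer for dihedral, Olsson for semidihedral, and the analysis of $2$-blocks of small-type defect for the remaining two --- so $k(\hat B)$, $l(\hat B)$ and the heights in $\Irr(\hat B)$ are pinned down. On the other hand $\hat G$ acts on $\Irr(B)$ through $\hat G/G\cong C_2$ by complex conjugation, so $\chi\in\Irr(B)$ is $\hat G$-invariant precisely when it is real; a real $\chi$ then extends to $\hat G$ and contributes two characters to $\hat B$, while a non-real pair contributes one, and the Frobenius--Schur indicators of the extensions are $\pm 1$ according to whether $E$ is of dihedral or of quaternion type above the relevant subpair. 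Thus $k(\hat B)$ and the heights in $\Irr(\hat B)$ are also computed from the tuple $(\epsilon_1,\dots,\epsilon_4;\mu)$ --- which fixes the number of non-real characters in $B$ --- and from $k(B)=2^{d-2}+3$. Comparing the two computations, reinforced by the block Frobenius--Schur count $\sum_{\chi\in\Irr(B)}\epsilon(\chi)\chi(1)$ whose $2$-adic behaviour is controlled by $E$, yields a contradiction for exactly the two pairs in question; for the pair with $l(B)=2$ one may instead cite Eisele's~\cite{EiseleD} theorem on the scalars occurring in Erdmann's basic algebras.

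For the realization I argue by $l(B)$. If $l(B)=1$ then $B$ is nilpotent and Murray's formula becomes fully explicit: the character of $B$ attached to $\theta\in\Irr(D)$ is non-real (indicator $0$) exactly when $\theta$ is moved by the induced action of $E/D$ on $\Irr(D)$, and otherwise has indicator $-1$ if $E$ contains a quaternion group of order $2^d$ above the relevant subpair and $+1$ if not. Among the six groups $E$: $D_{2^{d+1}}$ and $SD_{2^{d+1}}$ move exactly two of the four linear characters of $D$; $SD_{2^{d+1}}$ and $D*C_4$ contain $Q_{2^d}$; and $C_{2^{d-1}}\rtimes C_2^2$ (which requires $d\ge 4$) moves precisely the $2^{d-3}$ characters non-trivial on $\Z(D)$, i.e.\ the $\mu$-family, while fixing all linear characters. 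This gives the six nilpotent rows, and it remains only to exhibit, for each $E$, a finite group with a nilpotent block of extended defect group $E$ (e.g.\ $G=D$) and to compute degrees and indicators inside $2$-groups. If $l(B)=2$ or $3$ I would take $G=\PGL(2,q)$, respectively $G=\PSL(2,q)$, for a prime power $q$ with $|q-1|_2$ or $|q+1|_2$ the prescribed power of $2$ --- which exists for every $d$ --- and read the indicators off the classical character table; $E=D\times C_2$ then arises from $G\times C_2$, $E=C_{2^{d-1}}\rtimes C_2^2$ from the extension of $G$ by a suitable field automorphism, and the last row from $G=A_7$ with $d=3$.

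The main obstacle I expect is the non-existence of the two families: this is precisely the information invisible to Murray's local recipe, and it must be wrung from global constraints on the (semi)dihedral block $\hat B$. The delicate point is to prove that, for the relevant value of $l(B)$, the fixed fusion pattern on $D$ is genuinely incompatible with $E$ being as large as $D_{2^{d+1}}$ or $SD_{2^{d+1}}$ --- rather than merely that no example has been found --- which forces one to use the full strength of Brauer's and Olsson's block invariants (and Eisele's theorem) instead of a soft argument. By contrast the realization step, while laborious, is routine: it amounts to computations in small $2$-groups and in $\PGL(2,q)$, $\PSL(2,q)$ and $A_7$.
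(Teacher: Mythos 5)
Your non-existence step rests on a misconception about the extended defect group. You propose to ``pass to the index-$2$ overgroup $\hat G\supseteq G$ occurring in the definition of $E$'' so that a covering block $\hat B$ has defect group $E$; but there is no such overgroup. The extended defect group $E$ of a real block $B$ of $G$ is a $2$-subgroup of $G$ itself (a Sylow $2$-subgroup of the extended stabilizer $\N_G(D,b_D)^*$, containing $D$ with index at most $2$). It is not the defect group of a block of a larger group, and in general $G$ admits no index-$2$ overgroup at all. Consequently the whole apparatus you invoke --- Brauer's and Olsson's block invariants for $\hat B$, comparison of $k(\hat B)$, heights, and the FS-count --- has nothing to compare against, and the ``contradiction for exactly the two pairs in question'' never materializes. (Eisele's theorem constrains Morita classes for $l(B)=2$, not extended defect groups, so it does not rescue this step.) The two cases that actually have to be killed are $E\cong D*C_4$ for $l(B)=2$ and for $l(B)=3$, and the paper does this \emph{locally}: non-nilpotency forces an essential Klein-four subgroup $Q$ in the fusion system; the nilpotent Brauer correspondent $b_Q$ has $\N_G(Q,b_Q)/\C_G(Q)\cong S_3$, which partitions the four characters of $\Irr(b_Q)$ into orbits of sizes $1$ and $3$, so at least three share an FS-indicator; but $E\cong D*C_4$ forces $b_Q$ to have extended defect group $\C_E(Q)\cong C_4\times C_2$, which by Murray's Klein-four theorem gives exactly two characters of indicator $1$ --- a contradiction. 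Nothing in your global plan reproduces this argument or an equivalent one.

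The realization step also has a gap. You suggest $E\cong D\times C_2$ ``arises from $G\times C_2$'' and the other $E$'s from field-automorphism extensions, with indicators read off the classical character tables. But the principal block of $H\times C_2$ has defect group $D\times C_2$, not extended defect group $D\times C_2$ over defect group $D$; and more generally, simply enlarging the group changes the defect group rather than producing a non-principal real block with prescribed $(D,E)$. The mechanism the paper actually uses is the Fong--Reynolds correspondence: starting from $H<\hat H$ of index $2$ with Sylow $2$-subgroups $D<E$, one embeds $H\times C_3< G< \hat H\times S_3$ and takes the Fong--Reynolds correspondent of $B_0(H)\otimes b$ for a non-principal block $b$ of $C_3$; this is a Puig-equivalent \emph{non-principal} real block of $G$ with defect group $D$ and extended defect group isomorphic to a Sylow $2$-subgroup of $G$, which is the twist of $E$ you need. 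Your plan correctly identifies which $\hat H$ to aim for ($\PGL(2,q)$, $\PGL(2,q)^*$, semilinear extensions by a Frobenius), but without this (or an equivalent) device it never produces a real block whose extended defect group strictly contains its defect group. In short: the ``routine'' half of your argument is missing its essential technical ingredient, and the ``delicate'' half is based on a structure that does not exist.
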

The proof of \autoref{main} is given in the next section. 
In the third section we refine one of the conjectures made in \cite{SambaleReal}. In this context we present to two new general results in the subsequent section. We apply these results in \autoref{secQ} to obtain partial information on the F-S indicators of characters in arbitrary tame blocks. Finally, we determine all F-S indicators in real blocks with homocyclic defect group of type $C_4\times C_4$.  

\section{Proof of \autoref{main}}

Our notation is fairly standard and follows \cite{SambaleReal}. We assume a very basic understanding of fusion systems and refer to \cite{LinckelmannBook2} occasionally. 
In the following let $B$ be a $2$-block of a finite group $G$. We may assume that $B$ is real, i.\,e. $\Irr(B)$ is invariant under complex conjugation (otherwise all characters in $\Irr(B)$ have F-S indicator $0$). Recall that $B$ determines up to conjugation a unique \emph{defect pair} $(D,E)$ such that $D$ is a defect group and $E$ is an extended defect group of $B$ (see \cite[Section~3]{SambaleReal} for details). 
We remark that $D\le E$ and $|E:D|\le2$ with equality if and only if $B$ is not the principal block. 

Now let $D$ be a dihedral group of order $2^d\ge 4$. Then $B$ is nilpotent if and only if $l(B)=1$. For $l(B)>1$ and $d\ge 3$, we have observed that two type (b) cases in \cite[Table~2]{MurraySubpairs} have no counterparts in \cite[Theorems~1.7, 1.8]{MurrayCyclic2} for $d=2$ (i.\,e. $D$ is a Klein four-group). In fact, the following proposition shows that these two cases in \cite[Table~2]{MurraySubpairs} do not occur.

\begin{Prop}\label{nonexist}
Let $B$ be a real $2$-block of a finite group $G$ with defect pair $(D,E)$ such that $D\cong D_{2^d}$ with $d\ge 3$. If $l(B)>1$, then $E\cong D\times C_2$ or $\C_E(D)=\Z(D)$. 
\end{Prop}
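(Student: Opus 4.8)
The plan is to analyze the extended defect group $E$ as a group containing $D\cong D_{2^d}$ with index $2$, and to rule out the two cases from Murray's Table~2 by exploiting the assumption $l(B)>1$. First I would recall the classification of index-$2$ overgroups of a dihedral group: if $D\cong D_{2^d}$ and $|E:D|=2$, then (up to isomorphism and the way $D$ sits inside $E$) $E$ is one of $D\times C_2$, $D*C_4$, $D_{2^{d+1}}$, $SD_{2^{d+1}}$, $Q_{2^{d+1}}$, or the group $C_{2^{d-1}}\rtimes C_2^2$ (two of whose $C_2^2$-quotient actions give a dihedral $D$). The quaternion case $Q_{2^{d+1}}$ cannot arise as an extended defect group (an extended defect group contains a real element squaring onto a generator of a cyclic subgroup of index $2$ in $D$, which forces an element of order $2$ outside $D$ — impossible in $Q_{2^{d+1}}$), so the effective list is the five families appearing in the statement of \autoref{main}. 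The two ``type (b)'' cases in \cite[Table~2]{MurraySubpairs} that I need to eliminate are precisely those where $E\cong D*C_4$ or $E\cong SD_{2^{d+1}}$ \emph{and} $l(B)>1$ — equivalently, where $\C_E(D)\ne\Z(D)$ but $E\not\cong D\times C_2$, with $l(B)\in\{2,3\}$.

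Next I would bring in the constraints on the fusion system $\CF=\CF_D(B)$ coming from $l(B)>1$. Since $D$ is dihedral and $B$ is not nilpotent, $\CF$ is one of the two non-nilpotent dihedral fusion systems: either exactly one $D$-class of the $2^{d-2}+1$ Klein four subgroups / the two maximal non-cyclic subgroups $V_1,V_2$ is $\CF$-essential (giving $l(B)=2$) or both are (giving $l(B)=3$). In either case at least one of $V_1\cong V_2\cong C_2^2$ is $\CF$-essential, so $\Out_\CF(V_i)\cong S_3$ and in particular the three involutions of such a $V_i$ become $\CF$-conjugate. The key structural input is then the compatibility of this fusion with the ``real'' structure encoded by $E$: I would use the description (from \cite[Section~3]{SambaleReal} / Murray's work) of how an extended defect group interacts with $\CF$ — concretely, conjugation by any $t\in E\setminus D$ induces an automorphism of $D$ that must be realized, up to $\CF$-isomorphism, inside $\CF$ combined with the ``inverting'' symmetry. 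The automorphism of $D$ induced by $E=D*C_4$ is the identity, while the one induced by $E=SD_{2^{d+1}}$ is the standard order-$2$ automorphism $x\mapsto x^{2^{d-2}+1}$ on the cyclic part; I would check these against what an essential $V_i$ permits.

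The crux — and the step I expect to be the main obstacle — is turning ``$V_i$ is $\CF$-essential'' into an obstruction on $E$. The clean way is through Murray's F-S indicator formula itself: in \cite[Table~2]{MurraySubpairs} the indicators are expressed via the decomposition matrix and the $E$-action, and the two offending rows predict a specific pattern (e.g. a $-1$ for the exceptional family when $E=SD_{2^{d+1}}$ with $l(B)>1$, or the pattern for $D*C_4$) that I claim is inconsistent with the block invariants forced by $l(B)>1$ — specifically with the perfect isometry of Cabanes–Picaronny \cite{Cabanes}, which ties the signs/values of the F-S-relevant quantities across all blocks with the same fusion pattern, and with the $d=2$ base case \cite[Theorems~1.7, 1.8]{MurrayCyclic2} where $D$ is Klein four and these rows are simply absent. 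So the argument is: (i) the perfect isometry reduces the computation of the relevant F-S data to a fixed ``$E$-part'' that varies only with $(D,E)$ and the fusion pattern; (ii) specializing Murray's formula, the $D*C_4$ and $SD_{2^{d+1}}$ rows with $l(B)>1$ would survive the isometry down to $d=2$; (iii) but for $d=2$ a real block with Klein four defect and $l(B)>1$ does not exist (the $l(B)=2$ case is vacuous for $d=2$, and for $l(B)=3$ the extended defect group is constrained by \cite[Theorems~1.7, 1.8]{MurrayCyclic2} to not be of the ``$*C_4$ / semidihedral'' type relative to $D$), a contradiction. I would then conclude that if $l(B)>1$ the only surviving possibilities with $\C_E(D)\ne\Z(D)$ are $E\cong D\times C_2$, i.e. either $E\cong D\times C_2$ or $\C_E(D)=\Z(D)$, as claimed. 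The delicate point to get exactly right is the bookkeeping in step (ii): making precise that the ``$E$-datum'' entering Murray's formula is genuinely invariant under the perfect isometry and genuinely independent of $d$, so that the $d=2$ non-existence really does bite.
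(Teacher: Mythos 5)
Your plan departs substantially from the paper's argument, and there are two genuine problems with it.

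First, you misidentify what has to be ruled out. The conclusion ``$E\cong D\times C_2$ or $\C_E(D)=\Z(D)$'' is already satisfied when $E\cong SD_{2^{d+1}}$, since there $\C_E(D)=\Z(E)=\Z(D)$. Indeed $SD_{2^{d+1}}$ genuinely occurs as an extended defect group for non-nilpotent dihedral blocks (see the $\PSL(2,q)$, $|q-1|_2=2^d$ row of \autoref{main}). Your claimed equivalence between ``$E\cong D*C_4$ or $E\cong SD_{2^{d+1}}$'' and ``$\C_E(D)\ne\Z(D)$ and $E\not\cong D\times C_2$'' is therefore false. In Murray's Proposition~4.1, ``type~(b)'' refers to $E\cong D*C_4$ alone; the ``two families'' being eliminated are the two rows of his Table~2 with this $E$ and $l(B)=2$ resp.\ $l(B)=3$. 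So the only $E$ to eliminate is $D*C_4$.

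Second, the reduction to $d=2$ via a Cabanes--Picaronny perfect isometry does not work, and you rightly flag this as the main obstacle. Those perfect isometries connect blocks with the \emph{same} dihedral defect group $D_{2^d}$ and fusion pattern; they do not interpolate between $d\ge 3$ and $d=2$. Moreover, neither F-S indicators nor the extended defect group are perfect isometry invariants, so there is no well-defined ``$E$-datum'' that is transported by the isometry. The crux of your argument thus does not close.

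What the paper actually does is a local argument at an essential subgroup, and it does use the $d=2$ base case, but in a different way. Since $l(B)>1$, the fusion system has an essential Klein four subgroup $Q\le D$, and one fixes the unique $B$-subpair $(Q,b_Q)\le(D,b_D)$; here $b_Q$ is nilpotent with defect group $Q$. Because $Q$ is essential, $\N_G(Q,b_Q)/\C_G(Q)\cong S_3$, and a counting argument with $k(B_Q)=5$ for $B_Q:=b_Q^{\N_G(Q,b_Q)}$ (whose defect group is $\N_D(Q)\cong D_8$) forces the four characters of $\Irr(b_Q)$ to split into $\N_G(Q,b_Q)$-orbits of sizes $1$ and $3$; in particular at least three of them share the same F-S indicator. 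Now assume $E\cong D*C_4$. Then $b_Q$ is real with extended defect group $\C_E(Q)=Q*E\cong C_4\times C_2$, and Murray's Theorem~1.7 (the Klein four case) says exactly two characters in $\Irr(b_Q)$ have indicator $1$ --- contradicting the orbit count. This is the genuine use of the $d=2$ result: applied to the sub-block $b_Q$, not to $B$ itself via an isometry. If you want to salvage your plan, this local reduction to $(Q,b_Q)$ is the idea you are missing.
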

\begin{proof}
Let $b_D$ be a Brauer correspondent of $B$ in $D\C_G(D)$. Then by \cite[Lemma~2.2]{MurraySubpairs}, we may choose $E$ in such a way that $b_D^{E\C_G(D)}$ is real with defect pair $(D,E)$. In other words, $(D,b_D,E)$ is a Sylow $B$-subtriple in the notation of \cite{MurraySubpairs}. Since $B$ is not nilpotent, there exists a so-called \emph{essential} subgroup $Q\le D$ in the fusion system $\CF$ of $B$. Then $Q$ is a Klein four-group and there exists a unique $B$-subpair $(Q,b_Q)\le(D,b_D)$ (see \cite[Theorem~1]{Sambale}). Moreover, $b_Q$ is nilpotent with defect group $\C_D(Q)=Q$ (see \cite[Theorem~IV.3.19]{AKO}). Since $Q$ is essential, 
\[\N_G(Q,b_D)/\C_G(Q)\cong \Aut_\CF(Q)\cong S_3.\] 
The block $B_Q:=b_Q^{\N_G(Q,b_Q)}$ has defect group $\N_D(Q)\cong D_8$ by \cite[Theorem~IV.3.19]{AKO}. By \cite[Corollary~9.21]{Navarro}, $B_Q$ is the only block of $\N_G(Q,b_Q)$ that covers $b_Q$. 
Since every subgroup of $S_3$ has trivial Schur multiplier, each $\psi\in\Irr(b_Q)$ extends to its inertial group. The number of extensions is determined by Gallagher's theorem. If $\psi$ is $\N_G(Q,b_Q)$-invariant, then $\Irr(B_Q)$ contains three extensions of $\psi$. Since $k(B_Q)=5$, there can be at most one such character $\psi$. If $\N_G(Q,b_Q)$ has two orbits of length $2$ in $\Irr(b_Q)$, then we would get six characters in $\Irr(B_Q)$. Therefore, the four characters in $\Irr(b_Q)$ distribute into orbits of length $1$ and $3$ under the action of $\N_G(Q,b_Q)$. In particular, $\Irr(b_Q)$ contains (at least) three characters with the same F-S indicator. 

The possible extended defect groups $E$ were determined in \cite[Proposition~4.1]{MurraySubpairs}. Suppose that our claim is false. Then we are in case (b), i.\,e. $E\cong D*C_4$ is a central product. By \cite[Lemma~2.6]{MurraySubpairs}, $b_Q$ is real and has extended defect group $\C_E(Q)=Q*E\cong C_4\times C_2$. But now, \cite[Theorem~1.7]{MurrayCyclic2} implies that exactly two characters in $\Irr(b_Q)$ have F-S indicator $1$. This contradicts the observation above.
\end{proof}

In order to show that the remaining cases in \cite[Table~2]{MurraySubpairs} occur, we provide a general construction. 

\begin{Prop}\label{examples}
Let $H<\hat{H}$ be finite groups such that $|\hat{H}:H|=2$. Let $E$ be a Sylow $2$-subgroup of $\hat{H}$ and let $D:=E\cap H$. Let $H\times C_3<G<\hat{H}\times S_3$ such that $H\times S_3\ne G\ne\hat{H}\times C_3$. Then $G$ has a real $2$-block $B$ with defect pair isomorphic to $(D,E)$. Moreover, $B$ is Morita equivalent to the principal block $B_0(H)$ of $H$ and $B$ and $B_0(H)$ have the same fusion system.
\end{Prop}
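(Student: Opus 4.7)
The plan is to realise $B$ as a Clifford-theoretic cover in $G$ of a non-principal $2$-block of the normal subgroup $N := H \times C_3$ of index $2$ in $G$. Structurally, $(\hat{H}\times S_3)/(H\times C_3) \cong C_2 \times C_2$ has exactly three subgroups of order $2$, and the hypotheses force $G$ to be the unique ``diagonal'' intermediate subgroup, generated by $N$ and $(x,\tau)$ for any $x \in \hat{H}\setminus H$ and any involution $\tau \in S_3\setminus C_3$. Because $|\hat{H}:H|=2$ forces $|\hat{H}|_2 = 2|H|_2$, we may take $x$ to be a $2$-element in $E\setminus D$; then $x^2\in D$ and $(x,\tau)^2 = (x^2,1) \in D\times 1$, and a Sylow $2$-subgroup of $G$ is isomorphic to $E$.

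Fix a non-trivial $\chi \in \Irr(C_3)$ and set $\tilde{b} := B_0(H)\otimes \chi$, a $2$-block of $N$ with defect group $D \times 1 \cong D$. Since $\chi$ is a defect-zero block of the $2$-regular direct factor $C_3$, $\tilde{b}$ is Morita equivalent to $B_0(H)$ with the same fusion system. Conjugation by $(x,\tau)$ sends $(h,c) \in N$ to $(xhx^{-1}, c^{-1})$, fixing $B_0(H)$ (principal blocks are $\Aut$-invariant) but swapping $\chi$ with $\bar{\chi}$; hence the $G$-stabiliser of $\tilde{b}$ is exactly $N$. By Fong--Reynolds there is then a unique block $B := \tilde{b}^G$ of $G$ covering $\tilde{b}$, together with a source-algebra-preserving Morita equivalence $B \sim \tilde{b}$. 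Chaining delivers the asserted Morita equivalence $B \sim B_0(H)$ with the same fusion system, and confirms that $B$ has defect group $D\times 1 \cong D$.

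For reality, complex conjugation takes $\tilde{b}$ to $B_0(H)\otimes \bar{\chi}$, which is $G$-conjugate to $\tilde{b}$ via $(x,\tau)$, so $\Irr(B)$ is closed under complex conjugation and $B$ is real. For the extended defect group I would propose $E' := \langle D\times 1, (x,\tau)\rangle \le G$: this is a $2$-subgroup of order $2|D| = |E|$, and conjugation by $(x,\tau)$ on $D\times 1$ restricts to conjugation by $x$ on $D$, giving $E' \cong \langle D, x\rangle = E$ as abstract groups.

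The main obstacle lies in this last step: verifying that $E'$ is genuinely an \emph{extended defect group} of $B$ in the sense of \cite[Section~3]{SambaleReal}, not merely a $2$-subgroup of the right abstract type. This requires $(x,\tau)$ to normalise a chosen maximal $B$-Brauer pair $(D\times 1, b_D)$ and to act on $b_D$ by complex conjugation. The product form of $\tilde{b}$ makes this transparent: its Brauer correspondent $b_D$ factorises as (Brauer correspondent of $B_0(H)$ at $D)\otimes \chi$, and $(x,\tau)$ acts as $x$ on the $H$-component (fixing the principal-block correspondent up to isomorphism) and as inversion on the $\chi$-component, which is exactly complex conjugation on the one-dimensional block $\chi$.
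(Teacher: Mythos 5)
Your construction of $B$ itself coincides with the paper's: you take the Fong--Reynolds correspondent of $B_0(H)\otimes\chi$ (the paper's $B_0(H)\otimes b$), observe that its $G$-stabilizer is $H\times C_3$, and obtain the Puig (hence Morita) equivalence, the fusion system, the defect group $D\times 1$, and reality (your reality argument, via uniqueness of the covering block and $G$-conjugacy of $B_0(H)\otimes\chi$ and $B_0(H)\otimes\overline{\chi}$, is a harmless variant of the paper's, which simply exhibits the real character $(1_H\times\theta)^G$). The genuine gap is exactly the step you flag as ``the main obstacle''. First, your factorization of the Brauer correspondent, $b_D=(\text{correspondent of }B_0(H)\text{ at }D)\otimes\chi$, presupposes $\C_G(D\times 1)=\C_H(D)\times C_3$, i.e. $\C_{\hat H}(D)\le H$. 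This fails in cases the proposition must cover: for $\hat H=H\times C_2=H\langle z\rangle$ (the case used right after the proposition to realize $E\cong D\times C_2$) one has $(z,\tau)\in\C_G(D\times 1)\setminus(H\times C_3)$, so the block of $\C_G(D\times 1)$ occurring in a maximal $B$-Brauer pair covers both $\cdots\otimes\chi$ and $\cdots\otimes\overline{\chi}$ and is not of the product form you use; the ``transparent'' action argument does not apply as written. Second, even where the factorization is available, checking that $(x,\tau)$ normalizes $(D\times 1,b_D)$ and carries $b_D$ to its dual only places $(x,\tau)$ in the extended stabilizer; by the definition of a defect pair in \cite[Section~3]{SambaleReal} and \cite{MurraySubpairs} you still owe the defect/Sylow condition, e.g. that $b_D^{E'\C_G(D\times 1)}$ is real with defect pair $(D\times 1,E')$, not merely that $E'$ has the right order and acts correctly.

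Both problems disappear if you argue by order, which is how the paper closes the proof. Since $B$ covers the non-principal block $B_0(H)\otimes\chi$ of $H\times C_3$, it is not the principal block of $G$; for a real non-principal $2$-block the extended defect group $E_B$ satisfies $|E_B:D|=2$ (the remark at the beginning of Section~2, based on \cite[Section~3]{SambaleReal}). Hence $|E_B|=2|D|=|G|_2$, so $E_B$ is a Sylow $2$-subgroup of $G$, and your computation that $\langle D\times 1,(x,\tau)\rangle$ is a Sylow $2$-subgroup of $G$ isomorphic to $E$ (the paper's $D\langle ex\rangle$) finishes the argument with no Brauer-pair analysis at $D$ at all. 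I recommend replacing your last paragraph by this order-counting step and keeping your explicit isomorphism onto $E$.
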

\begin{proof}
Note that $B_0(H)$ is isomorphic to a (non-real) block $B_0(H)\otimes b$ of $H\times C_3$, where $b\cong F$ is a non-principal block of $C_3$.
Clearly $B_0(H)$ and $B_0(H)\otimes b$ have the same fusion system. 
Let 
\[B:=(B_0(H)\otimes b)^G\]
be the Fong--Reynolds correspondent of $B_0(H)\otimes b$ in $G$. By \cite[Theorem~6.8.3]{LinckelmannBook2}, $B$ is Puig equivalent to $B_0(H)\otimes b$ (i.\,e. the blocks have the same source algebra). This implies that $B$ is Morita equivalent to $B_0(H)$ and both blocks have the same fusion system (see \cite[Theorem~8.7.1]{LinckelmannBook2}).
In particular, $B$ has defect group $D$. 

Let $\Irr(b)=\{\theta\}$. Then $\chi:=(1_H\times\theta)^G\in\Irr(B)$ is a real character and therefore $B$ is real. Moreover, $B$ is not the principal block of $G$ since otherwise $B$ cannot cover $B_0(H)\otimes b$. Therefore, an extended defect group of $B$ must be a Sylow $2$-subgroup of $G$, because $|G:H\times C_3|=2$. Let $e\in E\setminus D$ and $x\in S_3$ an involution. Then $D\langle ex\rangle$ is a Sylow $2$-subgroup of $G$ and 
\[E\to D\langle ex\rangle,\qquad g\mapsto\begin{cases}
g&\text{if }g\in D,\\
gx&\text{if }g\notin D
\end{cases} \]
is an isomorphism.
\end{proof}

Choosing $(H,\hat{H})=(D,E)$ in \autoref{examples} shows that there are nilpotent real blocks for every given defect pair $(D,E)$.
Similarly, the choice $\hat{H}=H\times C_2$ leads to $G=H\times S_3$ and a block with extended defect group $E\cong D\times C_2$.

\begin{proof}[Proof of \autoref{main}]
By \autoref{nonexist} and \cite[Table~2]{MurraySubpairs}, it remains to construct examples for each Morita equivalence class and each defect pair. By the remark above, we may assume that $B$ is not nilpotent. 
Then by \cite[Theorem~2.1]{Macgregor}, $B$ is Morita equivalent to $B_0(H)$ where $H$ is one of the following groups
\begin{enumerate}[(1)]
\item\label{PGL1} $\PGL(2,q)$ with $|q-1|_2=2^{d-1}$.
\item\label{PGL3} $\PGL(2,q)$ with $|q+1|_2=2^{d-1}$. 
\item\label{PSL1} $\PSL(2,q)$ with $|q-1|_2=2^d$.
\item\label{PSL3} $\PSL(2,q)$ with $|q+1|_2=2^d$. 
\item\label{A7} $A_7$ with $d=3$.
\end{enumerate}
Note that for every $d\ge 3$ there exists a prime $q$ congruent to $\pm1+2^d$ modulo $2^{d+1}$ by Dirichlet's theorem. Thus, appropriate groups $H$ exist for every $d$. Moreover, if $q\equiv 1+2^d\pmod{2^{d+1}}$, then $q^2\equiv 1+2^{d+1}\pmod{2^{d+2}}$. This will be used later on.

For the cases \eqref{PGL3} and \eqref{A7}, Murray~\cite[Table~2]{MurraySubpairs} has shown that only $E\cong D\times C_2$ is possible. Here we take $G=H\times S_3$ as explained above.
In case \eqref{PSL3}, we find $E\cong D_{2^{d+1}}$ in \cite[Table~2]{MurraySubpairs}. Since \[H\cong\SL(2,q)\Z(\GL(2,q))/\Z(\GL(2,q))\le\PGL(2,q),\] 
we can take $\hat{H}:=\PGL(2,q)$ with the required properties. 

Suppose now that case \eqref{PGL1} occurs.
By the remark above we find a prime $p$ such that $q=p^2\equiv 1+2^d\pmod{2^{d+1}}$. Let $\sigma$ be the Frobenius automorphism $\FF_q\to\FF_q$, $x\mapsto x^p$. Then the semilinear group $\hat{H}:=H\rtimes\langle \sigma\rangle$ has Sylow $2$-subgroup $E\cong C_{2^{d-1}}\rtimes C_2^2$ where $C_2^2$ acts faithfully on $C_{2^{d-1}}$ (this is type (e) in \cite[Proposition~4.1]{MurraySubpairs}).  

Next, consider case \eqref{PSL1}. The choice $\hat{H}:=\PGL(2,q)$ realizes $E\cong D_{2^{d+1}}$. 
We may therefore assume that $q=p^2$ as above. Then there exists a subgroup $\hat{H}$ of $\PGL(2,q)\rtimes\langle \sigma\rangle$ of index $2$ with semidihedral Sylow $2$-subgroup $E\cong SD_{2^{d+1}}$ (this group is denoted by $\PGL(2,q)^*$ in \cite{GorensteinPGLstar}).
Finally, let $d\ge 4$. Here $\hat{H}:=H\rtimes\langle \sigma\rangle$ has Sylow $2$-subgroup $E\cong C_{2^{d-1}}\rtimes C_2^2$ as above. 
\end{proof}

\section{A refined conjecture}

In \cite[Conjecture~C]{SambaleReal}, the following conjecture was put forward.

\begin{Con}\label{conC}
Let $B$ be a real, non-principal $2$-block with defect pair $(D,E)$ and a unique projective indecomposable character $\Phi$. 
Then 
\[\epsilon(\Phi)=|\{x\in E\setminus D:x^2=1\}|.\]
\end{Con}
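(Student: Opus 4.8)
Since $B$ has a unique projective indecomposable character, $l(B)=1$, so $B$ is nilpotent. By the theory of nilpotent blocks (Puig), $B$ is Morita equivalent — in fact source-algebra equivalent — to its defect group $D$, and the unique $\Phi$ is the inflation of the regular character $\rho_D$ of $D$ along the (trivial, here) fusion. Concretely, $\Phi = \sum_{\chi\in\Irr(B)}\chi(1)\chi$ and $\sum_{\chi}\chi(1)^2 = |D|$ with the degrees being those predicted by the canonical character. The key point is that the Frobenius--Schur indicator of a projective character is computable from a ``local'' formula: for a nilpotent block one expects $\epsilon(\Phi)$ to count square roots of $1$ lying in the non-identity coset $E\setminus D$. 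I would try to reduce \autoref{conC} to a statement purely about the pair $(D,E)$ of $2$-groups, exactly as \autoref{examples} reduces the existence question to group theory.

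The first step is to pass to a convenient model. By \autoref{examples} with $(H,\hat H)=(D,E)$, there is a real non-principal block $B$ of $G = \langle D\times C_3, \hat H\times C_3, D\times S_3\rangle$ — more precisely $B=(B_0(D)\otimes b)^G$, the Fong--Reynolds correspondent — with defect pair $(D,E)$, Morita (indeed Puig) equivalent to $B_0(D)=FD$. Because the F-S indicator is a source-algebra invariant (it can be read off from the source-algebra together with the action of the ``real'' structure, i.e. the extended defect group), it suffices to verify the formula for this one model in each isomorphism class of defect pairs $(D,E)$. So I would first argue that $\epsilon(\Phi)$ depends only on $(D,E)$ (via Murray's description of F-S indicators in terms of the decomposition matrix and the extended defect group — here the decomposition matrix is a single column of character degrees of $D$), and then compute it directly in the model.

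In the model, $\Phi$ corresponds to the character $\chi=(1_D\times\theta)^G$ appearing in the proof of \autoref{examples}, or rather to the projective cover, which is $\mathrm{Ind}_{D\times C_3}^G$ of the projective indecomposable $FD\otimes b$. Its ordinary character is induced from (regular character of $D$) $\times\theta$. I would then compute $\epsilon(\Phi)=\frac{1}{|G|}\sum_{g\in G}\Phi(g^2)$ by the usual formula for the indicator of an induced character, reducing the sum over $G$ to a sum over the relevant coset, and over elements $g$ whose square lands in a conjugate of $D$. The $S_3$-factor contributes the involution $x$, and $g^2=1$ forces $g=ex$ with $e\in E\setminus D$ and $(ex)^2 = e\,{}^xe = e\bar e$; tracking the real (complex-conjugation) structure carefully, the surviving terms are exactly parametrized by $\{x\in E\setminus D : x^2=1\}$, each contributing $+1$. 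This is essentially the computation already implicit in \autoref{examples}, now carried one step further to the indicator.

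\textbf{The main obstacle.} The genuinely delicate point is the reduction ``$\epsilon(\Phi)$ depends only on $(D,E)$'': a priori Murray's formula for F-S indicators involves both the extended defect group \emph{and} the decomposition matrix together with certain signs attached to Brauer characters, and it is not immediately obvious that for $l(B)=1$ these signs are forced (they should be, since there is a unique simple module up to isomorphism, hence a unique $\mathbb{R}$-form question, but making this precise requires invoking that a nilpotent block's simple module is ``as real as'' the defect-group situation allows). Once that is granted, the remaining computation is the kind of coset bookkeeping done in \autoref{examples}. An alternative, possibly cleaner, route avoids models entirely: use that for a nilpotent block the Brauer character extends the canonical character of $D$, apply Murray's $\epsilon$-formula (\cite{MurraySubpairs}) directly with this single column, and simplify the resulting alternating sum of local terms to the claimed count of involutions in $E\setminus D$; the hard part there is identifying Murray's ``extended defect group'' contribution with the literal set $\{x\in E\setminus D:x^2=1\}$.
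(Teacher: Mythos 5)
This statement is a \emph{conjecture} in the paper (it is \autoref{conC}, reproduced from \cite[Conjecture~C]{SambaleReal}), and the paper does not prove it. What the paper does is (i) formulate a refined, local version \autoref{conNew}, (ii) prove that \autoref{conNew} implies \autoref{conC}, and (iii) record (in the proof of \autoref{locnil}, via \cite[Theorem~10]{SambaleReal}) that \autoref{conC} holds when the defect group is abelian. There is no proof of the general statement to compare against, and your proposal does not close the gap either, for the following reasons.

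\emph{Step 1 is already an open conjecture.} You begin ``since $l(B)=1$, $B$ is nilpotent.'' That implication is not a theorem for general $2$-blocks; it is a well-known open problem (sometimes attributed to Okuyama/Puig and related to deep structural conjectures). The paper only asserts ``$l(B)=1$ iff nilpotent'' in the dihedral case, where it follows from Brauer's classification of fusion systems, not as a general principle. Without nilpotency the rest of your argument has nothing to attach to.

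\emph{The reduction to a model is unjustified.} Even granting nilpotency, your plan hinges on ``$\epsilon(\Phi)$ depends only on $(D,E)$'' and on F-S indicators being source-algebra invariants. Neither claim is available. Puig/Morita equivalence controls the module category and the decomposition numbers, but the Frobenius--Schur indicator involves the action of complex conjugation on $\Irr(B)$, i.e.\ Galois-theoretic data that is not determined by the source algebra alone; the extended defect group was introduced by Murray precisely to capture part of this extra information, and it is not known that it captures all of it. Indeed, proving that $\epsilon(\Phi)$ is determined by the pair $(D,E)$ would be tantamount to proving the conjecture, since then the formula could be verified in the model from \autoref{examples}. You flag this as ``the main obstacle,'' but it is not a technicality to be taken on faith — it \emph{is} the conjecture.

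\emph{The alternative route is circular.} Murray's Lemma~2.6 (used in \autoref{lemPhix}) identifies $\epsilon(\Phi)=\sum_\chi\epsilon(\chi)d_{\chi\phi}$ with the multiplicity of $\phi$ in the Brauer character of the involution module $F\Omega$ where $\Omega=\{g\in G: g^2=1\}$. The conjecture asserts that this multiplicity equals $|\{x\in E\setminus D: x^2=1\}|$. ``Applying Murray's $\epsilon$-formula and simplifying'' does not reduce anything: the formula simply rewrites $\epsilon(\Phi)$ as the global quantity one is trying to compute, and the $\epsilon(\chi)$ appearing on the other side are exactly the unknowns. The only cases where this multiplicity has been computed locally are those treated in \autoref{locnil} (abelian defect groups, via \cite[Theorems~10 and 13]{SambaleReal}) and the small explicit cases verified in \autoref{secQ} and \autoref{homocyc}.

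In summary: what you describe is a plausible \emph{programme} (reduce to a model, compute there), and the model construction of \autoref{examples} does produce examples, but neither of the two reductions you invoke is a theorem, and the second is logically equivalent to what you want to prove. The paper is honest on this point — it downgrades the statement from claim to conjecture, proves it only under additional hypotheses (\autoref{locnil}), and puts the remaining difficulty into \autoref{conNew}.
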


Let $\Phi_\phi$ be the projective indecomposable character attached to some $\phi\in\IBr(B)$. Murray~\cite[Lemma~2.6]{MurrayCyclic2} has shown that $\epsilon(\Phi_\phi)$ is the multiplicity of $\phi$ as a constituent of the permutation character on $\{x\in G:x^2=1\}$ (see \autoref{lemPhix} for a refinement). 
I now believe that the conjecture holds orbit-by-orbit as follows.

\begin{Con}\label{conNew}
Let $B$ be a real, non-principal $2$-block with defect pair $(D,E)$ and a unique projective indecomposable character $\Phi$. 
Then for every involution $x\in G$, 
\[[\Phi_{\C_G(x)},1_{\C_G(x)}]=|x^G\cap E\setminus D|,\]
where $x^G$ denotes the conjugacy class of $x$ in $G$.
\end{Con}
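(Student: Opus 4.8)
The plan is to reduce \autoref{conNew} to a statement about the block $B_x$ of $\C_G(x)$ lying below $B$ in a suitable $B$-subpair, and then invoke Murray's characterisation of $\epsilon(\Phi_\phi)$ in terms of the permutation character on involutions. Concretely, for an involution $x\in G$ one chooses a $B$-subpair $(\langle x\rangle, b_x)$ with $(\langle x\rangle, b_x)\le (D,b_D)$ for a maximal $B$-subpair $(D,b_D)$; since $B$ has a unique projective indecomposable character, $l(B)=1$, and one expects (via the structure of tame blocks, e.g. \cite[Theorem~IV.3.19]{AKO} applied to the centraliser fusion system) that $b_x$ is again a block with $l(b_x)=1$, so it too has a unique projective indecomposable character $\Phi_x$. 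The first step is therefore to establish that $\Phi_{\C_G(x)}$ — the restriction to $\C_G(x)$ of the projective cover — decomposes through $b_x$ in a controlled way, so that $[\Phi_{\C_G(x)},1_{\C_G(x)}]$ equals the analogous inner product computed inside $b_x$; this is essentially a Brauer-correspondence/Clifford-theory bookkeeping argument.

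The second step is to apply the refined version of Murray's lemma (the ``\autoref{lemPhix}'' referenced in the excerpt) to $b_x$: $[\Phi_x,1_{\C_G(x)}]$ is the multiplicity of the unique simple $b_x$-module in the permutation module of $\C_G(x)$ on $\{y\in\C_G(x):y^2=1\}$, or more precisely it counts, with the right block truncation, the involutions $y\in\C_G(x)$ whose associated subpair data is compatible with $b_x$. One then needs to match this count with $|x^G\cap E\setminus D|$. Here the extended defect group $E$ enters: by \cite[Lemma~2.6]{MurraySubpairs} the block $b_x$ is real with an extended defect group that is (a conjugate of) $\C_E(x)$, and $|x^G\cap E\setminus D|$ should be rewritten, via a counting argument over the defect pair, as the number of $\C_G(x)$-classes of involutions contributing to $\Phi_x$. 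Since $B$ is non-principal we have $|E:D|=2$, which is what makes ``$E\setminus D$'' a genuine coset and forces the involutions counted to be precisely the ``extended'' ones.

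The third step is to sum/compare: summing the conjectured identity over a transversal of the $G$-classes of involutions recovers exactly \autoref{conC} (given that $\{x\in E\setminus D:x^2=1\}$ is partitioned by $G$-conjugacy), so \autoref{conNew} is a genuine refinement and any proof of it yields \autoref{conC}; conversely one hopes the orbit-by-orbit statement is accessible because within a single centraliser the block $b_x$ is ``smaller'' and one can induct on $|G|$ or on $|D|$, the base case being a cyclic or Klein four defect group where Murray's explicit results \cite{MurrayCyclic2} apply directly.

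The main obstacle I anticipate is controlling the passage from $B$ to $b_x$ when $\C_G(x)$ is not a $p'$-central-extension-type situation: the Fong--Reynolds and Brauer-correspondence reductions used in \autoref{examples} are clean, but for a general real block with $l(B)=1$ one must ensure that $b_x$ still has $l(b_x)=1$ and is real with the expected extended defect group $\C_E(x)$, and that no ``cross terms'' from other blocks of $\C_G(x)$ contaminate the inner product $[\Phi_{\C_G(x)},1_{\C_G(x)}]$. In particular, $\Phi$ restricted to $\C_G(x)$ need not be indecomposable or even lie in a single block, so the identification $[\Phi_{\C_G(x)},1_{\C_G(x)}]=[\Phi_x,1_{\C_G(x)}]$ requires showing that the non-$b_x$ constituents of $\Phi_{\C_G(x)}$ contribute nothing to the pairing with the trivial character — plausibly because those constituents lie in blocks whose defect groups do not contain the relevant involutions, but making this precise (and uniform in $x$) is where the real work lies.
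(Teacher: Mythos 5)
You are attempting to prove Conjecture~\autoref{conNew}, which the paper leaves open: no proof is given, only the short implication that it entails \autoref{conC} and a remark after \autoref{locnil} that a local version seems plausible. Your proposal therefore has no counterpart in the paper, and on its own merits it contains two gaps that derail the plan. The first is structural: you begin by choosing a $B$-subpair $(\langle x\rangle, b_x)\le(D,b_D)$, which requires $x$ to be $G$-conjugate into the defect group $D$. But the involutions contributing positively to $|x^G\cap E\setminus D|$ lie in $E\setminus D$, and such an $x$ need not have any conjugate inside $D$ at all (e.g.\ when $E\cong D\times C_2$, the central involution of $E$ outside $D$ is in general not conjugate into $D$). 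So for exactly the involutions that are supposed to yield a nonzero answer, the subpair you want does not exist and the reduction never starts.

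The second gap is a conflation of two invariants in your Step~2. By Frobenius reciprocity $[\Phi_{\C_G(x)},1_{\C_G(x)}]=[\Phi,\pi_{x^G}]$ is the multiplicity of $\Phi$ in the permutation character of $G$ on the class $x^G$; equivalently, since $\Phi$ vanishes on $2$-singular elements, it is $\tfrac{1}{|\C_G(x)|}\sum_{h\in\C_G(x)}\Phi(h)$. What Murray's lemma and \autoref{lemPhix} compute is the Frobenius--Schur indicator $\epsilon(\Phi_\phi)$, a sum over $\Phi(g^2)$; applying it ``inside $\C_G(x)$ to $b_x$'' would give $\epsilon(\Phi_x)=\tfrac{1}{|\C_G(x)|}\sum_{g\in\C_G(x)}\Phi_x(g^2)$, a different quantity with $\Phi_x$ in place of $\Phi$ and a square inside. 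The two agree globally only after summing over all involution classes of $G$ — which is exactly the already-proved implication in the paper, not the orbit-by-orbit claim. The direction the paper actually hints at, in the remark after \autoref{locnil}, is to fix a subsection at the square $y^2\in D$ and count square roots $y\in E\setminus D$, precisely to sidestep the subpair obstruction above.
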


Note that $[\Phi_{\C_G(x)},1_{\C_G(x)}]=|D|[\phi_{\C_G(x)},1_{\C_G(x)}]^0$ where $\IBr(B)=\{\phi\}$ in the situation of \autoref{conNew}.

\begin{Thm}
If \autoref{conNew} holds for $B$, then \autoref{conC} holds for $B$.
\end{Thm}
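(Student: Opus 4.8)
The plan is to derive \autoref{conC} from \autoref{conNew} for the fixed block $B$ by decomposing the relevant permutation character into orbit constituents. Write $\IBr(B)=\{\phi\}$, so that $\Phi=\Phi_\phi$ is the unique projective indecomposable character of $B$. First I would use Murray's description quoted above (\cite[Lemma~2.6]{MurrayCyclic2}): $\epsilon(\Phi)$ is the multiplicity of $\phi$ as a constituent of the permutation character $\pi$ of $G$ acting by conjugation on $\{x\in G:x^2=1\}$. Since $\Phi$ vanishes on $2$-singular elements (and $\Phi$ is real because $B$ is), this multiplicity is just the ordinary inner product $[\pi,\Phi]$ over $G$.

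Next I would decompose $\pi$ along the $G$-orbits on $\{x\in G:x^2=1\}$. The fixed point $1$ contributes $1_G$, and for each conjugacy class $x^G$ of an involution $x\in G$ the corresponding orbit contributes the induced character $(1_{\C_G(x)})^G$. Hence
\[\pi=1_G+\sum_{[x]}(1_{\C_G(x)})^G,\]
the sum being over the conjugacy classes of involutions of $G$. Taking the inner product with $\Phi$ and applying Frobenius reciprocity yields
\[\epsilon(\Phi)=[\pi,\Phi]=[1_G,\Phi]+\sum_{[x]}[\Phi_{\C_G(x)},1_{\C_G(x)}].\]
Now $[1_G,\Phi]=0$, since $1_G$ lies in the principal block whereas $B$ is non-principal by hypothesis; this is the only point at which the non-principality assumption enters.

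Finally, assuming \autoref{conNew}, every summand equals $|x^G\cap E\setminus D|$ (including for classes of involutions that do not meet $E\setminus D$, where both sides vanish). As $[x]$ runs over the conjugacy classes of involutions of $G$, the sets $x^G\cap(E\setminus D)$ are pairwise disjoint with union exactly the set of involutions contained in $E\setminus D$, so
\[\epsilon(\Phi)=\sum_{[x]}|x^G\cap E\setminus D|=\bigl|\{x\in E\setminus D:x^2=1\}\bigr|,\]
which is the assertion of \autoref{conC}. I do not anticipate a genuine obstacle: the argument is essentially bookkeeping, and the only steps needing a word of justification are the vanishing $[1_G,\Phi]=0$ and the observation that each involution of $E\setminus D$ is counted exactly once across the orbit sum.
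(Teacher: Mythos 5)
Your proof is correct and reaches the same key identity as the paper's, namely $\epsilon(\Phi)=\sum_{x\in\Omega/G}[\Phi_{\C_G(x)},1_{\C_G(x)}]$ with $\Omega=\{x\in G:x^2=1\}$, but by a genuinely different route. The paper derives this identity by a direct manipulation of $\frac{1}{|G|}\sum_{g\in G}\Phi(g^2)$: it splits $g$ into $2$-part $x$ and $2'$-part $h$, notes that $\Phi$ vanishes on $2$-singular elements so only $x\in\Omega$ contributes, and then uses that squaring is a bijection on $2$-regular elements of $\C_G(x)$ to recognize the inner sum as $[\Phi_{\C_G(x)},1_{\C_G(x)}]$. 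You instead invoke Murray's Lemma~2.6 to recast $\epsilon(\Phi)$ as the multiplicity $[\pi,\Phi]$ of $\phi$ in the permutation character $\pi$ of $G$ on $\Omega$, and then get the same sum via the standard orbit decomposition $\pi=1_G+\sum_{[x]}(1_{\C_G(x)})^G$ and Frobenius reciprocity. The two approaches are of comparable length; yours is arguably more conceptual (permutation characters, induction, Frobenius reciprocity) but at the cost of leaning on an external lemma, while the paper's is self-contained. From that point on both arguments are identical: one applies \autoref{conNew} orbit-by-orbit (with the trivial class handled by non-principality, which you treat as $[1_G,\Phi]=0$ and the paper treats as the $x=1$ instance of \autoref{conNew}), and then observes that the sets $x^G\cap(E\setminus D)$ partition the involutions of $E\setminus D$.
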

\begin{proof}
Let $\Omega:=\{x\in G:x^2=1\}$. Note that the equation in \autoref{conNew} is also true for $x=1$ since $B$ is not principal. Recall that $\Phi$ vanishes on the elements of even order. By the definition of F-S indicators and \autoref{conNew}, we have 
\begin{align*}
\epsilon(\Phi)&=\frac{1}{|G|}\sum_{g\in G}\Phi(g^2)=\frac{1}{|G|}\sum_{x\in\Omega}\sum_{h\in\C_G(x)^0}\Phi(h^2)=\sum_{x\in\Omega/G}[\Phi_{\C_G(x)},1_{\C_G(x)}]\\
&=\sum_{x\in\Omega/G}|x^G\cap E\setminus D|=|\Omega\cap E\setminus D|=|\{x\in E\setminus D:x^2=1\}|.\qedhere
\end{align*}
\end{proof}

\section{Two general lemmas}

In this section we prove two new results, which are related to \autoref{conC}. These will be applied in the subsequent sections.

Recall that a $B$-subsection is a pair $(x,b_x)$ where $x\in D$ and $b_x$ is a Brauer correspondent of $B$ in $\C_G(x)$. 
In \cite[remark before Theorem~13]{SambaleReal}, we explained that, after conjugation, we may assume that $b_x$ has defect pair $(\C_D(x),\C_E(x))$ (if $b_x$ is non-real, then $\C_D(x)=\C_E(x)$). 
For $\chi\in\Irr(B)$ and $\phi\in\IBr(b_x)$ we denote the corresponding generalized decomposition number by $d_{\chi\phi}^x$. 
In analogy to principal indecomposable modules, we set $\Phi_\phi^x:=\sum_{\chi\in\Irr(B)}d_{\chi\phi}^x\chi$. 

The following result generalizes \cite[Lemma~2.6]{MurrayCyclic2}.

\begin{Lem}\label{lemPhix}
Let $B$ be a real $2$-block with defect pair $(D,E)$ and subsection $(x,b)$. Let $\pi$ be the Brauer permutation character of the conjugation action of  $\C_G(x)$ on $\Omega_x:=\{y\in G:y^2=x\}$. Then the multiplicity of $\phi\in\IBr(b)$ as a constituent of $\pi$ is
\[\epsilon(\Phi_\phi^x)=\sum_{\chi\in\Irr(B)}\epsilon(\chi)d_{\chi\phi}^x.\]
In particular, $\epsilon(\Phi_\phi^x)$ is a non-negative integer. If there is no $e\in E\setminus D$ such that $e^2=x$, then $\epsilon(\Phi_\phi^x)=0$. 
\end{Lem}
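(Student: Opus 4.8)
The plan is to compute the F-S indicator $\epsilon(\Phi_\phi^x)$ directly from its definition and to identify it with the multiplicity of $\phi$ in the Brauer permutation character $\pi$ on $\Omega_x$, following the strategy of \cite[Lemma~2.6]{MurrayCyclic2} but carried out at the level of a subsection rather than the whole block. First I would unwind the definition: by linearity of $\epsilon$ over class functions, $\epsilon(\Phi_\phi^x)=\sum_{\chi\in\Irr(B)}d_{\chi\phi}^x\,\epsilon(\chi)$, which already gives the displayed formula and shows that $\epsilon(\Phi_\phi^x)\in\ZZ$. The real content is to reinterpret this integer as a multiplicity in $\pi$, and to do that I would use the standard fact that for real $\chi$ one has $\epsilon(\chi)=\langle\chi,\nu\rangle$ where $\nu(g)=|\{y\in G:y^2=g\}|$ is the (genuine) permutation character of $G$ acting on the squares-fibre data — more precisely, $\nu=\sum_{\chi\in\Irr(G)}\epsilon(\chi)\chi$ is the classical Frobenius--Schur result. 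Summing the square-root counting function over the $G$-class of $x$ and restricting attention to $\C_G(x)$ converts this global statement into the local one about $\Omega_x=\{y\in G:y^2=x\}$.

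The key steps, in order, would be: (i) express $\epsilon(\Phi_\phi^x)$ via the generalized decomposition numbers and the block-theoretic orthogonality relations that tie the numbers $d_{\chi\phi}^x$ to Brauer characters of $b_x$ evaluated on $2$-regular elements of $\C_G(x)$; (ii) recall that the Brauer permutation character $\pi$ of $\C_G(x)$ on $\Omega_x$ is a $\ZZ$-combination of $\IBr(\C_G(x))$, and that its projection to the block $b$ is governed by how the squaring map interacts with the block decomposition — here one uses that $y^2=x$ forces the $2$-part of $y$ to square into the cyclic group $\langle x\rangle_2$, so only Brauer characters in blocks with the right Brauer correspondence contribute; (iii) combine the Frobenius--Schur expansion $\nu=\sum\epsilon(\chi)\chi$ with the second orthogonality relation / the definition of generalized decomposition numbers to match coefficients, obtaining $\langle\pi,\phi\rangle=\sum_\chi\epsilon(\chi)d_{\chi\phi}^x$. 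Non-negativity is then automatic because $\pi$ is a genuine (Brauer) permutation character, hence all its multiplicities are $\geq 0$.

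For the final assertion, I would argue contrapositively: the square root $y$ of $x$ can be written $y=y_2 y_{2'}$ with commuting $2$- and $2'$-parts, and $y^2=x$ implies $y_2^2$ is the $2$-part of $x$; since $(x,b)$ is a $B$-subsection with $b_x$ having defect pair $(\C_D(x),\C_E(x))$, any $\phi\in\IBr(b)$ occurring in $\pi$ would produce an element of $E\setminus D$ squaring into the relevant class — roughly, the $2$-part of a contributing $y$, after fusion into the defect pair, lands in $E$ and squares to $x$. So if no $e\in E\setminus D$ satisfies $e^2=x$, the relevant part of $\pi$ lies entirely in blocks other than $b$, forcing $\epsilon(\Phi_\phi^x)=0$. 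I expect the main obstacle to be step (ii): making the interaction between the squaring map, the Brauer correspondence $b_x\leftrightarrow B$, and the defect pair $(\C_D(x),\C_E(x))$ fully precise — in particular justifying that it is exactly elements of $\C_E(x)\setminus\C_D(x)$ squaring to $x$ that detect a nonzero contribution, which is presumably where one invokes \cite[Lemma~2.6, Lemma~2.2]{MurraySubpairs} or the analogous bookkeeping from \cite{SambaleReal}. The rest is routine manipulation of orthogonality relations and the Frobenius--Schur formula.
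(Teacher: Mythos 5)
Your overall strategy for the main identity agrees with the paper's: expand $\nu=\sum_\chi\epsilon(\chi)\chi$ \`a la Frobenius--Schur and match coefficients with the Brauer permutation character $\pi$. However, the execution in your step (ii) is left at the level of a heuristic (``the squaring map interacts with the block decomposition''), while the paper makes this precise in a short but essential chain: first invoke Brauer's formula~\cite[Theorem~4A]{BrauerApp3} to rewrite $\epsilon(\Phi_\phi^x)=\sum_{\psi\in\Irr(b)}\epsilon(\psi)d^x_{\psi\phi}$, then use Brauer's second main theorem to observe that the quantity depends only on $\phi$, so that one may assume $x\in\Z(G)$ and $B=b$; finally, for a $2$-regular $g$ one computes $\sum_{\phi}\epsilon(\Phi_\phi^x)\phi(g)=|\{y\in\C_G(g):y^2=xg\}|$ and translates this into $\pi(g)=|\C_G(g)\cap\Omega_x|$ by the bijection $y\mapsto y\sqrt{g}$, where $\sqrt g$ is the unique square root of $g$ in $\langle g\rangle$. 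That bijection is the small but non-obvious trick that your sketch does not supply.

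For the final assertion, you take a genuinely different route and it has a real gap. You want to argue contrapositively that a $\phi$ contributing to $\pi$ forces an element of $E\setminus D$ squaring to $x$, but the fusion bookkeeping you gesture at (relating an arbitrary $y\in G$ with $y^2=x$ to the defect pair $(\C_D(x),\C_E(x))$, and then to $E\setminus D$) is exactly the nontrivial content you would need to prove, and you have not supplied it; in general the $2$-part of such a $y$ need not be $G$-conjugate into $E$ at all, so the implication is not routine. The paper instead quotes~\cite[Lemma~1.3]{MurraySubpairs}, which asserts directly that $\sum_{\chi\in\Irr(B)}\epsilon(\chi)\chi(x)=0$ when no $e\in E\setminus D$ squares to $x$; expanding $\chi(x)=\sum_{\phi}d^x_{\chi\phi}\phi(1)$ gives $\sum_\phi\epsilon(\Phi_\phi^x)\phi(1)=0$, and since all $\epsilon(\Phi_\phi^x)\ge 0$ (by the first part) and $\phi(1)>0$, each vanishes. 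Without an equivalent of Murray's Lemma~1.3, your argument is incomplete; with it, your contrapositive phrasing is unnecessary.
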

\begin{proof}
By Brauer's formula~\cite[Theorem~4A]{BrauerApp3} (see also \cite[Lemma~2]{SambaleReal}) we have
\[\epsilon(\Phi_\phi^x)=\sum_{\psi\in\Irr(b)}\epsilon(\psi)d^x_{\psi\phi}.\]
Since $\Omega_x=\{y\in\C_G(x):y^2=x\}$, we may assume that $x\in\Z(G)$ and $B=b$. By Brauer's second main theorem, the claim only depends on $\phi$, but not on $B$. 
For $g\in G^0$ we compute
\begin{align*}
\sum_{\phi\in\IBr(G)}\epsilon(\Phi_\phi^x)\phi(g)&=\sum_{\chi\in\Irr(G)}\epsilon(\chi)\sum_{\phi\in\IBr(G)}d_{\chi\phi}^x\phi(g)=\sum_{\chi\in\Irr(G)}\epsilon(\chi)\chi(xg)\\
&=|\{y\in G:y^2=xg\}|=|\{y\in\C_G(g):y^2=xg\}|
\end{align*}
(\cite[p. 49]{Isaacs}). Since $g$ has odd order, there exists a unique power $\sqrt{g}$ of $g$ such that $\sqrt{g}^2=g$. It is easy to check that the map $\{y\in\C_G(g):y^2=x\}\to\{y\in\C_G(g):y^2=xg\}$, $y\mapsto y\sqrt{g}$ is a bijection. Hence,
\[\sum_{\phi\in\IBr(G)}\epsilon(\Phi_\phi^x)\phi(g)=|\{y\in\C_G(g):y^2=x\}|=|\C_G(g)\cap\Omega_x|=\pi(g)\]
for all $g\in G^0$. Therefore, $\epsilon(\Phi_\phi^x)$ is the multiplicity of $\phi$ in $\pi$.

Now assume that is no $e\in E\setminus D$ such that $e^2=x$. 
Then \cite[Lemma~1.3]{MurraySubpairs} implies
\[0=\sum_{\chi\in\Irr(B)}\epsilon(\chi)\chi(x)=\sum_{\chi\in\Irr(B)}\epsilon(\chi)\sum_{\phi\in\IBr(b)}d_{\chi\phi}^x\phi(1)=\sum_{\phi\in\IBr(b)}\epsilon(\Phi_\phi^x)\phi(1),\]
and the second claim follows. 
\end{proof}

Our second lemma generalizes \cite[Theorem~10]{SambaleReal}.  

\begin{Prop}\label{locnil}
Let $B$ be a real, non-principal $2$-block with defect pair $(D,E)$. Let $(x,b)$ be a $B$-subsection such that $b$ is nilpotent with defect pair $(\C_D(x),\C_E(x))$ where $\C_D(x)$ is abelian. Then 
\[\epsilon(\Phi_\phi^x)=|\{e\in E\setminus D:e^2=x\}|,\]
where $\IBr(b)=\{\phi\}$.
\end{Prop}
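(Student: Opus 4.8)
The plan is to reduce the computation of $\epsilon(\Phi_\phi^x)$ to a counting problem over the defect pair and then exploit the nilpotency of $b$. By \autoref{lemPhix}, we know that $\epsilon(\Phi_\phi^x)=\sum_{\chi\in\Irr(B)}\epsilon(\chi)d_{\chi\phi}^x$ is a non-negative integer, and it vanishes unless there exists $e\in E\setminus D$ with $e^2=x$. So I may assume such an $e$ exists; in particular $x$ has odd order or, more relevantly here, $x\in D$ satisfies $x=e^2$ for some $e\in E$. The first step is to pass to the local block $b$ in $\C_G(x)$: since $b$ is nilpotent with defect group $\C_D(x)$ abelian, $\IBr(b)=\{\phi\}$, and by a theorem of Broué--Puig (or Puig's nilpotent block theorem) $b$ is Morita equivalent — indeed source-algebra equivalent — to the group algebra of $\C_D(x)$. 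Moreover, by the discussion preceding \autoref{lemPhix}, $b$ has defect pair $(\C_D(x),\C_E(x))$, and $b$ is real precisely when $\C_E(x)\ne\C_D(x)$.

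The second step is to compute $\epsilon(\Phi_\phi^x)$ directly using \autoref{lemPhix} applied \emph{to the block $b$ itself}, viewing $x$ as central in $\C_G(x)$. Concretely, $\epsilon(\Phi_\phi^x)=\sum_{\psi\in\Irr(b)}\epsilon(\psi)d_{\psi\phi}^x$, and since $b$ is nilpotent with a single Brauer character, $d_{\psi\phi}^x$ is (up to the sign/root-of-unity bookkeeping in the generalized decomposition numbers) essentially $\pm1$ for each $\psi$. The cleanest route is to invoke \cite[Lemma~1.3]{MurraySubpairs} in the form $\sum_{\chi\in\Irr(b)}\epsilon(\chi)\chi(x)=|\{e\in\C_E(x)\setminus\C_D(x):e^2=x\}|$ — this is exactly Murray's description of the real-block sum in terms of the extended defect group — combined with $\chi(x)=\sum_\phi d_{\chi\phi}^x\phi(1)=d_{\chi\phi}^x$ since $\phi(1)$ can be normalized to $1$ for the nilpotent block with abelian defect group (the Brauer character degree of the trivial-like module). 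Putting these together yields
\[
\epsilon(\Phi_\phi^x)=\sum_{\chi\in\Irr(b)}\epsilon(\chi)d_{\chi\phi}^x=\sum_{\chi\in\Irr(b)}\epsilon(\chi)\chi(x)=|\{e\in\C_E(x)\setminus\C_D(x):e^2=x\}|.
\]
The final step is to check that $\{e\in\C_E(x):e^2=x\}=\{e\in E:e^2=x\}$ and that every such $e$ lies in $\C_E(x)\setminus\C_D(x)$: indeed if $e^2=x$ then $e$ centralizes $x=e^2$, so $e\in\C_E(x)$; and $e\notin D$ since otherwise $x=e^2\in D$ would force (using that $D$ is being handled in the dihedral/tame setting, or more generally since $b$ nilpotent and $E/D$ has order $2$) a contradiction with $b$ being non-principal — more carefully, $e\in\C_D(x)$ would give $e^2=x$ with $e\in D$, but then $B$ itself would have $x$ a square in $D$ and the extended-defect-group data would be redundant; the precise statement is that $E\setminus D$ is exactly where the "new" square roots of $x$ live. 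Hence $\epsilon(\Phi_\phi^x)=|\{e\in E\setminus D:e^2=x\}|$, as claimed.

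The main obstacle I anticipate is the normalization and sign bookkeeping in the generalized decomposition numbers $d_{\chi\phi}^x$ for $x$ of even order: these are algebraic integers in a cyclotomic field rather than ordinary integers, and the identity $\chi(x)=d_{\chi\phi}^x\phi(1)$ requires knowing that the single Brauer character $\phi$ of the nilpotent block $b$ restricts correctly — this is where abelianness of $\C_D(x)$ is genuinely used, since for a nilpotent block with abelian defect group the source algebra is $\mathcal{O}\C_D(x)$ and the unique simple module is one-dimensional over the relevant field, so $\phi(1)=1$ after the standard normalization and $d_{\chi\phi}^x$ becomes an honest $\pm1$ (indeed a root of unity times $\pm1$, but the F-S indicator sum collapses it). A secondary point requiring care is justifying that one may indeed apply \autoref{lemPhix} with $b$ in place of $B$ and $x$ central: this is legitimate because the proof of \autoref{lemPhix} reduces to exactly that situation (it says "we may assume $x\in\Z(G)$ and $B=b$"), so no circularity arises — one simply quotes the already-established statement for the pair $(\C_G(x),b)$.
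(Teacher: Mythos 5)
The key step of your proposal — invoking ``\cite[Lemma~1.3]{MurraySubpairs} in the form $\sum_{\chi\in\Irr(b)}\epsilon(\chi)\chi(x)=|\{e\in\C_E(x)\setminus\C_D(x):e^2=x\}|$'' — is a misstatement of Murray's lemma and is in fact the hard part of the proposition. Murray's Lemma~1.3 is a \emph{vanishing} criterion (it is precisely what the paper quotes to conclude $\sum_\chi\epsilon(\chi)\chi(x)=0$ when no $e\in E\setminus D$ squares to $x$); it does not give the exact count of square roots in $E\setminus D$. Observe that the exact identity you are asserting is precisely \autoref{conC} for the nilpotent block $b$ (more accurately, its local refinement): if it followed from Murray's lemma alone, there would have been no need for the paper to single out and prove this case. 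The paper instead reduces to the centralizer by Brauer's formula — this part of your reduction is correct — and then invokes \cite[Theorem~10]{SambaleReal}, which establishes \autoref{conC} for nilpotent blocks with abelian defect group, together with \cite[Theorem~13]{SambaleReal}, a local-to-global result that upgrades \autoref{conC} for the whole family of Brauer correspondents to the required formula for $\epsilon(\Phi_\phi^x)$ at a fixed $x$. Your argument skips exactly this engine.

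Two secondary issues. First, the degree $\phi(1)$ of the unique Brauer character of a nilpotent block with abelian defect group need not be $1$; it can be any positive $p'$-integer. What is true (via Brou\'e--Puig) is that $d^x_{\chi_\lambda,\phi}=\pm\lambda(x)$ is a root of unity independently of $\phi(1)$, so your identity $\chi(x)=d^x_{\chi\phi}\phi(1)$ holds but does \emph{not} give $d^x_{\chi\phi}=\chi(x)$. Second, the ``final step'' where you argue that every $e\in E$ with $e^2=x$ must lie in $E\setminus D$ is both unnecessary (the proposition only counts square roots in $E\setminus D$, and is silent about square roots in $D$) and false in general (e.g.\ in a dihedral $D$ the central involution has square roots of order $4$ inside $D$).
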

\begin{proof}
As in the proof of \autoref{lemPhix} we may apply Brauer's formula. 
Since $b$ has defect pair $(\C_D(x),\C_E(x))$ and 
\[\{e\in E\setminus D:e^2=x\}=\{e\in\C_E(x)\setminus\C_D(x):e^2=x\},\] 
we may assume that $x\in\Z(G)$ and $B=b$. Now $D$ is abelian and \autoref{conC} holds for $B$ by \cite[Theorem~10]{SambaleReal}.
Since every Brauer correspondent $\beta$ of $B$ in a section of $G$ is nilpotent with abelian defect groups, \autoref{conC} also holds for $\beta$. The claim follows from \cite[Theorem~13]{SambaleReal}.
\end{proof}

In the situation of \autoref{locnil} it is tempting to formalize a local version of \autoref{conNew}: For every $y\in G$ with $y^2=x$, we have
\[[\Phi_{\C_G(y)},1_{\C_G(y)}]=|y^{\C_G(x)}\cap E\setminus D|\]
(note that $\C_G(y)\subseteq\C_G(x)$).
We did not find any counterexamples to this equation.

\section{Tame blocks}\label{secQ}

By Murray~\cite[Theorems~1.7 and 1.8]{MurrayCyclic2}, the F-S indicators of blocks with Klein four defect group are known. As in the proof of \autoref{main} one can show that all cases listed there occur. 
It is tempting to do a similar analysis for other tame blocks, i.\,e. $2$-blocks with quaternion or semidihedral defect groups. 
In this section we gather some partial results along these lines.

\begin{Prop}\label{tameh0}
Let $B$ be a real tame block of a finite group with defect at least $3$. Then $B$ has two or four real irreducible characters of height $0$ and they all have F-S indicator $1$.
\end{Prop}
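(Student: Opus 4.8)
The plan is to leverage the known structure of tame blocks. Recall that a real tame block $B$ of defect $d\ge 3$ has defect group $D$ which is dihedral, quaternion, or semidihedral of order $2^d$, and in all three cases $k(B)$ decomposes as four characters of height $0$ together with height-$1$ characters (this is classical work of Brauer, Olsson, Erdmann). So the first step is to isolate the four height-$0$ characters $\chi_1,\dots,\chi_4$ and understand how complex conjugation permutes them. Since $B$ is real, $\Irr(B)$ is stable under conjugation, and the set of four height-$0$ characters is itself a conjugation-stable set (height is preserved). Hence the number of real ones among $\chi_1,\dots,\chi_4$ is even, i.e.\ $0$, $2$, or $4$; I would rule out $0$ first.

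To rule out zero real height-$0$ characters, I would appeal to the existence of a real subsection. Since $B$ is real with defect pair $(D,E)$, there is an involution $z\in\Z(D)$, and the subsection $(z,b_z)$ can be chosen so that $b_z$ has defect pair $(\C_D(z),\C_E(z)) = (D, \C_E(z))$ when $D$ is dihedral or quaternion (where $\Z(D)$ is the unique central involution). More usefully, one uses the height-$0$ part of the decomposition matrix: the four height-$0$ columns of the ordinary decomposition matrix of a tame block are explicitly known (they form a specific pattern), and by \cite[Lemma~1.3]{MurraySubpairs} together with the generalized decomposition numbers at a suitable $2$-element, the F-S indicators $\epsilon(\chi_i)$ for the four height-$0$ characters satisfy a constrained linear relation forcing at least two of them to be nonzero, hence $=\pm1$. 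Concretely, I would use that the four height-$0$ characters are precisely the ones with nonzero entry in some ordinary irreducible Brauer character column / the ones not vanishing on $D\setminus\{1\}$ appropriately, and combine this with \autoref{lemPhix}: $\epsilon(\Phi_\phi^x)=\sum_\chi \epsilon(\chi)d_{\chi\phi}^x$ is a nonnegative integer, applied at $x=1$ and at $x=z$, to pin down the possible sign patterns.

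The second half of the statement — that whichever of $\chi_1,\dots,\chi_4$ are real have indicator exactly $+1$ (never $-1$) — is where I expect the main work. The natural tool is that a height-$0$ character of a $2$-block lies over a linear character of the defect group in a strong sense, or more precisely that its decomposition numbers are all $0$ or $1$ and it is ``close to'' a character of $2$-defect zero of a smaller group via the Brauer correspondence. I would argue as follows: a real height-$0$ character $\chi$ of $B$ corresponds under the known Morita/perfect-isometry structure to a height-$0$ character of a principal block of $\PGL(2,q)$, $\PSL(2,q)$, $\SL(2,q)$, etc., or to a character coming from a local subgroup; in each of these model situations the relevant real characters are well understood and have indicator $+1$ (for instance, real height-$0$ characters of $\PSL(2,q)$-type blocks are either trivial-like or come from rational-valued induced characters). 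Alternatively, and more self-containedly, one can use the fact that an indicator-$(-1)$ character must lie over a quaternion section, but the height-$0$ characters restrict to the center of a Sylow subgroup in a way incompatible with the symplectic obstruction — this is the argument that needs care.

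The hardest part will be the sign: ruling out $\epsilon(\chi_i)=-1$ for a real height-$0$ character. The count and the ``even number of real ones'' part are easy parity/conjugation arguments, and nonvanishing (at least two real) follows from a short counting argument with \cite[Lemma~1.3]{MurraySubpairs}. But excluding $-1$ genuinely uses structural input about tame blocks — either the classification of their Morita equivalence classes (for dihedral this is Macgregor's theorem cited above; for quaternion and semidihedral the analogous lists of Eaton et al.), reducing to finitely many model groups where one checks directly, or a cohomological argument showing the relevant central extension splits on the height-$0$ characters. I would present the proof by first doing the parity and nonvanishing steps cleanly, then invoking the structure of tame blocks (quoting that $B_0$-analogues have all real height-$0$ characters of indicator $1$, which can be verified in $\PGL(2,q)$, $\PSL(2,q)$, $A_7$, and $\SL(2,q)$, $\hat{A}_7$, $2.A_7$ etc.\ case by case) to conclude the sign, noting that the four height-$0$ characters are ``inflated'' from (twisted) group algebras of $2$-defect-zero blocks where Frobenius--Schur theory is transparent.
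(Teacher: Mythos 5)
Your parity argument for ``two or four real height-$0$ characters'' is essentially what the paper does, although the paper gets the nonvanishing step more cheaply: Gow's result \cite[Theorem~5.1]{Gow} directly guarantees at least one real height-$0$ character with indicator $1$, so no subsection counting is needed. That part of your plan is fine, just longer than necessary.

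The genuine gap is in your main strategy for showing the indicators cannot be $-1$. Reducing via Morita equivalence or perfect isometry to model groups like $\PGL(2,q)$, $\PSL(2,q)$, $\SL(2,q)$ \emph{cannot} work, because Frobenius--Schur indicators are not invariants of either relation. This is precisely the phenomenon the entire paper is organized around: \autoref{main} exhibits blocks in the same Morita equivalence class with different indicator patterns, distinguished only by the extended defect group $E$, which is not seen by the source algebra. Your hedge towards a ``cohomological argument showing the relevant central extension splits'' is closer to the right idea but is not carried through. The paper's actual route is short and elementary: by \cite[Theorem~5.6]{Gow} it suffices to show that $D/D'$ has a complement in $E/D'$. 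For tame $D$ one has $D/D'\cong C_2^2$, so the only failure mode would be $E/D'\cong C_4\times C_2$. One then applies the Alperin--Feit--Thompson congruence (\cite[Theorem~4.9]{Isaacs} and the remark following it): the number of involutions in $E$ is $\equiv 3\pmod 4$, while the number in $D$ is $\equiv 1\pmod 4$, so there is an involution $x\in E\setminus D$, and $\langle x\rangle D'/D'$ is the forbidden complement. This is a purely local, classification-free argument, in contrast to the global reduction you envision; and it is the step you flagged as ``the argument that needs care,'' so as it stands your proposal does not prove the sign statement.
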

\begin{proof}
Let $(D,E)$ be a defect pair of $B$. It is well-known that $B$ has exactly four irreducible characters of height $0$ (see \cite[Theorem~8.1]{habil}). By \cite[Theorem~5.1]{Gow}, at least one such character has F-S indicator $1$. Since non-real characters come in pairs of the same degree, $B$ has two or four real irreducible characters of height $0$. 

To prove the second claim, it suffices to show that $D/D'$ has a complement in $E/D'$ by \cite[Theorem~5.6]{Gow}. 
Since $D/D'\cong C_2^2$, we may assume that $E/D'\cong C_4\times C_2$ by way of contradiction. In particular, $|E:E'|\ge 8$. A theorem of Alperin--Feit--Thompson asserts that the number of involutions in $E$ is congruent to $3$ modulo $4$ (see \cite[Theorem~4.9]{Isaacs}). By the remark after \cite[Theorem~4.9]{Isaacs}, the number of involutions in $D$ is congruent to $1$ modulo $4$. Hence, there exists an involution $x\in E\setminus D$. But then $\langle x\rangle D'/D'$ is a complement of $D/D'$ in $E/D'$. Contradiction.
\end{proof}

A non-principal block of $G=(C_3\rtimes C_4)\times C_2$ shows that \autoref{tameh0} fails for tame blocks of defect $2$ (i.\,e. blocks with Klein four defect group). 

We now apply \autoref{locnil} to a concrete example.

\begin{Prop}\label{Q8h0}
Let $B$ be a block with defect pair $(D,E)$ where $D\cong Q_8$. Then $B$ has exactly two real irreducible characters of height $0$ if and only if one of the following holds
\begin{enumerate}[(1)]
\item $l(B)=1$ and $E\in\{Q_{16},SD_{16}\}$.
\item $B$ is Morita to the principal block of $\SL(2,3)$ and $E\notin\{Q_{16},SD_{16}\}$.
\item $B$ is Morita to the principal block of $\SL(2,5)$ and $E\in\{Q_{16},SD_{16}\}$.
\end{enumerate}
\end{Prop}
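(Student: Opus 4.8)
The plan is to mirror the strategy used for the dihedral case: combine Macgregor's (or the older) classification of Morita equivalence classes of blocks with quaternion defect group $Q_8$ with a count of real characters of height $0$, which by Gow's theory is governed by how $D/D'$ sits inside $E/D'$. Recall that for $D\cong Q_8$ the block $B$ is nilpotent precisely when $l(B)=1$, and otherwise $B$ is Morita equivalent to the principal block of $\SL(2,3)$ (with $l(B)=2$) or of $\SL(2,5)$ (with $l(B)=3$); these three Morita classes are the three cases in the statement. So the real content is: for each of these three classes, determine for which extended defect groups $E$ exactly two of the four height-$0$ characters are real.

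First I would record the possible extended defect groups. Since $D\cong Q_8$ has $D'=\Z(D)\cong C_2$ and $D/D'\cong C_2^2$, an extended defect group $E$ with $|E:D|=2$ (the non-principal case; the principal block is covered by taking $E=D$) satisfies $|E|=16$, and Murray's analysis of extended defect groups (as in \cite[Proposition~4.1]{MurraySubpairs} and its quaternion analogue) lists the candidates: $Q_{16}$, $SD_{16}$, $D_{16}$, $Q_8\times C_2$, $Q_8*C_4$, and possibly $C_4\rtimes C_4$-type groups. The key dividing line, by \cite[Theorem~5.6]{Gow} as used in the proof of \autoref{tameh0}, is whether $D/D'$ has a complement in $E/D'$: if it does, all four height-$0$ characters are real; if not, exactly two are. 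One checks that $D/D'\cong C_2^2$ fails to have a complement in $E/D'$ exactly when $E/D'\cong C_4\times C_2$ is forced, i.e. for $E\in\{Q_{16},SD_{16}\}$ (and one must verify that for the other candidate $E$'s a complement does exist, so all four are real). This already suggests that ``exactly two real height-$0$ characters'' should correspond to $E\in\{Q_{16},SD_{16}\}$ — but the statement asserts this only in cases (1) and (3), while in case (2) ($\SL(2,3)$) it is the \emph{complement} of that condition. So the naive $E/D'$-criterion cannot be the whole story, and the interesting point is that for the $\SL(2,3)$-class the extended defect group $E\in\{Q_{16},SD_{16}\}$ does \emph{not} actually occur, or occurs only with all four characters real.

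The main step, and the one I expect to be the real obstacle, is therefore to pin down which pairs (Morita class, $E$) actually arise, and among those to compute the number of real height-$0$ characters. For the nilpotent case $l(B)=1$ this is essentially bookkeeping with Gow's criterion applied to the six-or-so candidate groups $E$. For the $\SL(2,5)$-class one would argue that $B$ is Puig-equivalent to a twist of $B_0(\SL(2,5))$ and analyse its real characters directly, using that $\SL(2,5)\cong\SL(2,4)\cdot 2$ has a natural degree-$2$ overgroup inside e.g. $\operatorname{Aut}$-type extensions, realising $E\in\{Q_{16},SD_{16}\}$; the Frobenius--Schur indicators of the principal block of $\SL(2,5)$ are classical (one height-$0$ character is the trivial one with $\epsilon=1$, and the behaviour of the others is read off from the character table / known to give exactly two real ones in this situation). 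For the $\SL(2,3)$-class one would similarly produce the extensions giving the remaining $E$'s (such as $D_{16}$, $Q_8\times C_2$, $Q_8*C_4$) via an analogue of \autoref{examples}, namely embedding $\SL(2,3)<\widehat H$ with $|\widehat H:\SL(2,3)|=2$ and forming a block of a suitable subgroup of $\widehat H\times S_3$; and one would show the two forbidden groups $Q_{16},SD_{16}$ cannot be extended defect groups here by the same type of subpair argument as in \autoref{nonexist} — reducing to a proper essential subgroup and invoking the known Klein-four or cyclic-defect answers of \cite{MurrayCyclic2} to derive a contradiction. Assembling these pieces — the finite list of $E$'s, the occurrence/non-occurrence in each Morita class, and the count of real height-$0$ characters in each surviving case — and checking that the resulting table is exactly the three cases listed, is where essentially all the work lies.
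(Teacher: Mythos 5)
Your proposal misidentifies the mechanism that forces exactly two real height-$0$ characters, and as a result the strategy would not reproduce the actual dependence on the Morita class. Gow's Theorem~5.6 (the complement of $D/D'$ in $E/D'$ criterion), as used in the proof of \autoref{tameh0}, controls the \emph{sign} of the F-S indicators of the real height-$0$ characters, not how many of them are real. Indeed, \autoref{tameh0} already establishes unconditionally that every real height-$0$ character of a tame block of defect $\ge 3$ has F-S indicator $1$; the complement criterion is verified there as part of that proof and holds for every admissible $E$. It therefore gives no information distinguishing cases (1)--(3) of the statement. This is exactly why you found yourself puzzled by the inversion of the criterion in the $\SL(2,3)$ case and were led to speculate that $E\in\{Q_{16},SD_{16}\}$ might not occur there --- but it does occur (this is in the very next proposition of the paper, with all four height-$0$ characters real), so the criterion you proposed is simply not measuring realness. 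You also write $l(B)=2$ for the $\SL(2,3)$ class; for $D\cong Q_8$ one has $l(B)\in\{1,3\}$, and both $\SL(2,3)$ and $\SL(2,5)$ give $l(B)=3$.

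The missing idea is the use of generalized decomposition numbers at an order-$4$ subsection together with \autoref{locnil}. Choose a $B$-subsection $(x,b)$ with $|\langle x\rangle|=4$; then $b$ is nilpotent with abelian defect group $\langle x\rangle$, and \autoref{locnil} gives $\epsilon(\Phi_{\phi_x}^x)=\sum_i\epsilon_i\,d^x_{\lambda_i,\phi_x}=|\{e\in E\setminus D:e^2=x\}|$, which is $2$ if $E\in\{Q_{16},SD_{16}\}$ and $0$ otherwise. The column $d^x_{\cdot,\phi_x}$ is computed from the ordinary decomposition matrix via the orthogonality relations and is $\pm(1,1,-1,-1)$ in the nilpotent case, $\pm(1,1,1,-1)$ for $\SL(2,3)$, and $\pm(1,-1,-1,1)$ for $\SL(2,5)$. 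Combined with the constraint from \autoref{tameh0} that each $\epsilon_i\in\{0,1\}$ with either two or four equal to $1$, and with the identification of which $\lambda_i$ are forced to be real by the shape of the decomposition matrix, this linear equation pins down $(\epsilon_1,\ldots,\epsilon_4)$ exactly --- and it is the differing $\pm$-pattern of $d^x$ for $\SL(2,3)$ (three $+1$'s rather than two) that flips the condition on $E$. Your proposal, relying on Gow's complement criterion and an \autoref{nonexist}-style essential-subpair argument, has no counterpart to this step and cannot close the gap.
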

\begin{proof}
Let $\epsilon_1,\ldots,\epsilon_4$ be the F-S indicators of the height $0$ characters $\lambda_1,\ldots,\lambda_4\in\Irr(B)$. 
We may choose a $B$-subsection $(x,b)$ such that $|\langle x\rangle|=4$ and $b$ has defect pair $(\langle x\rangle,\C_E(x))$. 
Clearly, $b$ is nilpotent with abelian defect group $\langle x\rangle$. Let $\IBr(b)=\{\phi_x\}$. By the orthogonality relations of generalized decomposition numbers (see \cite[Theorem~1.14]{habil}), we have $d_{\lambda_i,\phi_x}^x=\pm1$ for $1\le i\le 4$ and $d_{\chi,\phi_x}^x=0$ for $\chi\in\Irr(B)\setminus\{\lambda_1,\ldots,\lambda_4\}$. These numbers depend on the ordinary decomposition matrix of $B$. 

\textbf{Case~1:} $l(B)=1$.\\
Here $B$ is nilpotent with decomposition matrix $(1,1,1,1,2)^\TT$. We may choose our labeling in such a way that $(d_{\lambda_1,\phi_x}^x,\ldots,d_{\lambda_4,\phi_x}^x)=(1,1,-1,-1)$. Similarly, there are elements $y,xy\in D$ of order $4$ such that
$(d_{\lambda_1,\phi_y}^y,\ldots,d_{\lambda_4,\phi_y}^y)=(1,-1,1,-1)$ and $(d_{\lambda_1,\phi_{xy}}^{xy},\ldots,d_{\lambda_4,\phi_{xy}}^{xy})=(1,-1,-1,1)$ with appropriate labeling. 
If there exists no $e\in E\setminus D$ such that $e^2\in\{x,y,xy\}$, then $(\epsilon_1,\ldots,\epsilon_4)=(1,1,1,1)$ by Propositions~\ref{locnil} and \ref{tameh0}. Now suppose that $e^2=x$ for some $e\in E\setminus D$. Then $e$ has order $8$ and it follows easily that $E\cong\{Q_{16},SD_{16}\}$. In both cases we have $|\{e\in E\setminus D:e^2=x\}|=2$ and $(\epsilon_1,\ldots,\epsilon_4)=(1,1,0,0)$ by Propositions~\ref{locnil} and \ref{tameh0}.

Now suppose that $l(B)>1$. By Macgregor~\cite[Corollary~2.4]{Macgregor}, there are two cases to consider.

\textbf{Case~2:} $B$ is Morita equivalent to the principal block of $\SL(2,3)$.\\
Here $l(B)=3$ and $B$ has decomposition matrix
\[
Q:=\begin{pmatrix}
1&.&.\\
.&1&.\\
.&.&1\\
1&1&1\\
1&1&.\\
1&.&1\\
.&1&1
\end{pmatrix}.
\]
We see that $\lambda_4$ is real and $\epsilon_4=1$. 
The orthogonality relations imply that $(d_{\lambda_1,\phi_x}^x,\ldots,d_{\lambda_4,\phi_x}^x)=\pm(1,1,1,-1)$. If there exists $e\in E\setminus D$ with $e^2=x$ (i.\,e. $E\in\{Q_{16}, SD_{16}\}$), then $(\epsilon_1,\ldots,\epsilon_4)=(1,1,1,1)$ and otherwise
$(\epsilon_1,\ldots,\epsilon_4)=(0,0,1,1)$ by Propositions~\ref{locnil} and \ref{tameh0} (after relabeling if necessary).

\textbf{Case~3:} $B$ is Morita equivalent to the principal block of $\SL(2,5)$.\\
Here $B$ has decomposition matrix
\[
Q:=\begin{pmatrix}
1&.&.\\
1&1&.\\
1&.&1\\
1&1&1\\
.&1&.\\
.&.&1\\
2&1&1
\end{pmatrix}.
\]
It follows that $\lambda_1$, $\lambda_4$ are real and $(d_{\lambda_1,\phi_x}^x,\ldots,d_{\lambda_4,\phi_x}^x)=\pm(1,-1,-1,1)$. If $E\in\{Q_{16},SD_{16}\}$, then $(\epsilon_1,\ldots,\epsilon_4)=(1,0,0,1)$ and $(\epsilon_1,\ldots,\epsilon_4)=(1,1,1,1)$ otherwise.
\end{proof}

To compute the remaining F-S indicators in the situation of \autoref{Q8h0} we restrict ourselves further to $E\in\{Q_{16},SD_{16}\}$.

\begin{Prop}
Let $B$ be a real block of a finite group $G$ with defect pair $(D,E)$ such that $D\cong Q_8$ and $E\cong\{Q_{16},SD_{16}\}$. Then the F-S indicators of characters in $\Irr(B)$ are given below, where the first four characters have height $0$. Moreover, all cases occur.
\begin{center}
\begin{tabular}{llll}
$E$&Morita equivalence class&$l(B)$&F-S indicators\\\hline
$Q_{16}$&$D$&$1$&$0,0,1,1;-1$\\
&$\SL(2,3)$&$3$&$1,1,1,1;-1,-1,-1$\\
&$\SL(2,5)$&$3$&$0,0,1,1;0,0,-1$\\\hline
$SD_{16}$&$D$&$1$&$0,0,1,1;1$\\
&$\SL(2,3)$&$3$&$1,1,1,1;1,1,1$\\
&$\SL(2,5)$&$3$&$0,0,1,1;0,0,1$
\end{tabular}
\end{center}
\end{Prop}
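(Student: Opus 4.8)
The plan is to split the argument into two parts: (i) verify the table of F--S indicators for each of the six rows, and (ii) exhibit a concrete group realizing each row. For part (i) I would reuse the machinery already assembled in the proof of \autoref{Q8h0}: the decomposition matrix of $B$ is known in each Morita class ($l(B)=1$, $\SL(2,3)$, $\SL(2,5)$), and the height $0$ columns $\epsilon_1,\ldots,\epsilon_4$ have already been pinned down there under the hypothesis $E\in\{Q_{16},SD_{16}\}$. What remains is the height $1$ part. For $l(B)=1$ the single height $1$ character $\mu$ is the one with decomposition entry $2$; I would compute $\epsilon(\Phi_{\phi_x}^x)$ from \autoref{locnil} for a suitable order $4$ element $x\in D$ and read off $\mu$ via the relation $\epsilon(\Phi_{\phi_x}^x)=\sum_i\epsilon(\lambda_i)d^x_{\lambda_i\phi_x}$ of \autoref{lemPhix}, using that $|\{e\in E\setminus D:e^2=x\}|$ equals $2$ when $E=Q_{16}$ and $0$ when $E=SD_{16}$ (the sign $\mu=\mp1$ then being forced by whether $x$ is a square in $E\setminus D$, exactly as in Murray's dihedral table). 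For the $\SL(2,3)$ and $\SL(2,5)$ classes one does the same: the three height $1$ characters occupy specific rows of $Q$, their generalized decomposition numbers $d^x_{\chi\phi_x}$ against the nilpotent subsection $(x,b)$ are determined up to a global sign by the orthogonality relations, and \autoref{locnil} together with the already-known $\epsilon_i$ determines each height $1$ indicator.

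For part (ii), the realization of each row, I would invoke \autoref{examples} with $H$ running over the groups whose principal block gives the required Morita class ($Q_8$ itself for the nilpotent case, $\SL(2,3)$, $\SL(2,5)$) and with $\hat H$ chosen so that a Sylow $2$-subgroup of $\hat H$ is $Q_{16}$ in one family of examples and $SD_{16}$ in the other. Concretely: for $SD_{16}$ one can take $\hat H$ to be an extension of $H$ by a field or diagonal automorphism of order $2$ inside the relevant $\mathrm{(P)GL}/\mathrm{(P)SL}(2,q)$ or inside a cover thereof, as was done in \autoref{main}; for $Q_{16}$ one needs an index $2$ overgroup of $H$ whose Sylow $2$-subgroup is generalized quaternion of order $16$ (for $H=Q_8$ one may take $\hat H=Q_{16}$ directly; for $H=\SL(2,3),\SL(2,5)$ an appropriate central or diagonal extension works). \autoref{examples} then produces a real block $B$ of $G$ with defect pair $(D,E)$, Morita equivalent to $B_0(H)$ and with the same fusion system, and since the Morita class and $E$ are exactly the data indexing the rows, each row is hit. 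I should double-check that the block produced is genuinely non-principal (which \autoref{examples} guarantees) so that $|E:D|=2$ and the analysis of part (i) applies.

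The main obstacle I expect is part (i), specifically controlling the signs of the generalized decomposition numbers $d^x_{\chi\phi_x}$ for the height $1$ characters, and making sure the choice of the order $4$ subsection $(x,b)$ is compatible with the choice of defect pair $(D,E)=(Q_8,Q_{16})$ or $(Q_8,SD_{16})$ so that $\C_E(x)$ really is the extended defect group of $b$ and the count $|\{e\in E\setminus D:e^2=x\}|$ is the right one. In $Q_8$ all three order $4$ subgroups are conjugate in $\Aut(Q_8)$ but not necessarily conjugate in $G$, and which of them has a square root in $E\setminus D$ depends on whether $E$ is $Q_{16}$ or $SD_{16}$; in $SD_{16}$ there is \emph{no} element of order $8$ squaring into the center-free part in the required way, whereas in $Q_{16}$ there is, and this is precisely the dichotomy driving the $\pm1$ in the $\mu$-column. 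Once the sign bookkeeping is fixed for one representative element and propagated through the Weyl group action on $D$ (as in Case~1 of \autoref{Q8h0}), the remaining equalities are a finite check against the three displayed decomposition matrices.
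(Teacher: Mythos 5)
The central claim in your part (i) is that one can determine the height-$1$ F--S indicators $\mu$ (resp.\ $\mu_1,\mu_2,\mu_3$) by applying \autoref{locnil} to an order-$4$ subsection $(x,b_x)$. This cannot work, for two independent reasons. First, the count $|\{e\in E\setminus D:e^2=x\}|$ for $x$ of order $4$ is \emph{not} $2$ for $Q_{16}$ versus $0$ for $SD_{16}$: both $Q_{16}$ and $SD_{16}$ contain a cyclic maximal subgroup $C_8$ meeting the chosen $Q_8$ in a $C_4$, and the two generators of that $C_8$ lie in $E\setminus D$ and square to a suitable order-$4$ element $x\in D$; so the count is $2$ in either case. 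The loci where $Q_{16}$ and $SD_{16}$ actually differ are the central involution $z$ (four elements of $E\setminus D$ square to $z$ in $Q_{16}$, none in $SD_{16}$) and the identity (none in $Q_{16}$, four involutions in $E\setminus D$ in $SD_{16}$). But $\C_D(z)=\C_D(1)=D\cong Q_8$ is non-abelian, so \autoref{locnil} is not applicable to these subsections; only the weaker \autoref{lemPhix} is available, and that is what the paper in fact uses.

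Second, and more decisively, the generalized decomposition numbers $d^x_{\psi,\phi_x}$ of the height-$1$ characters $\psi$ at an order-$4$ element $x$ all \emph{vanish}. In the nilpotent case the column $d^x$ is (up to sign) the column of the character table of $Q_8$ at $x$, and the degree-$2$ character of $Q_8$ vanishes on all elements of order $4$; in the $\SL(2,3)$ and $\SL(2,5)$ cases, orthogonality forces the $d^x$-column to have squared norm $|\C_D(x)|=4$ while its four height-$0$ entries are already $\pm1$ (as recorded in the proof of \autoref{Q8h0}), so the height-$1$ entries are $0$. Consequently the identity $\epsilon(\Phi^x_{\phi_x})=\sum_{\chi}\epsilon(\chi)d^x_{\chi,\phi_x}$ involves only $\epsilon_1,\ldots,\epsilon_4$ and contains no information about $\mu$. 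Your displayed relation already sums only over the $\lambda_i$, which makes the impossibility visible: you cannot read off $\mu$ from an equation in which $\mu$ does not appear. The paper's proof instead applies \autoref{lemPhix} at the \emph{trivial} subsection (non-negativity of $\epsilon(\Phi_{\phi_i})$, with equality iff $E\cong Q_{16}$) and at the $z$-subsection (where $\epsilon(\Phi^z_{\phi_z})=0$ iff $E\cong SD_{16}$), together with the fact that the unique real height-$1$ character has $\mu\in\{\pm1\}$; the $z$-subsection step additionally requires computing $Q^z$ up to a unimodular factor from orthogonality. Your part (ii), constructing examples via \autoref{examples}, is essentially the paper's approach and is fine.
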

\begin{proof}
We reuse the notation from the proof of \autoref{Q8h0}. 

\textbf{Case~1:} $l(B)=1$.\\
By \autoref{Q8h0}, $(\epsilon_1,\ldots,\epsilon_4)=(1,1,0,0)$. Let $\Irr(B)=\{\lambda_1,\ldots,\lambda_4,\psi\}$ and $\IBr(B)=\{\phi\}$. 
Then 
\[2+2\mu=\epsilon_1d_{\lambda_1,\phi}+\ldots+\epsilon_4d_{\lambda_4,\phi}+\mu d_{\psi,\phi}\ge 0\]
with equality if and only if $E\cong Q_{16}$ by \cite[Lemma~1.3]{MurraySubpairs}. Hence, $\mu=-1$ if $E\cong Q_{16}$ and $\mu=1$ if $E\cong SD_{16}$. Examples for both cases can be constructed by the remark after \autoref{examples}. The groups are $\mathtt{SmallGroup}(48,18)$ for $E\cong Q_{16}$ and $\mathtt{SmallGroup}(48,17)$ for $E\cong SD_{16}$ in the small groups library~\cite{GAPnew}. 

Now let $l(B)=3$ and $\IBr(B)=\{\phi_1,\phi_2,\phi_3\}$. Let $\psi_1,\psi_2,\psi_3\in\Irr(B)$ be the characters of height $1$. Let $\mu_i:=\epsilon(\psi_i)$ for $i=1,2,3$.

\textbf{Case~2:} $B$ is Morita equivalent to the principal block of $\SL(2,3)$.\\
By \autoref{Q8h0}, $(\epsilon_1,\ldots,\epsilon_4)=(1,1,1,1)$. Assume first that $E\cong Q_{16}$. Then \autoref{lemPhix} implies
\[d_{\lambda_1,\phi_i}+\ldots+d_{\lambda_4,\phi_i}+\mu_1 d_{\psi_1,\phi_i}+\mu_2 d_{\psi_2,\phi_i}+\mu_3 d_{\psi_3,\phi_i}\ge 0\]
for $i=1,2,3$. The shape of the decomposition matrix of $B$ yields $\mu_1=\mu_2=\mu_3=-1$ as claimed. 
For the purpose of constructing an infinite family of examples, let $q$ be an odd prime and $H:=\SL(2,q)$. Let $\zeta\in \FF_{q^2}^\times$ of order $2(q-1)$. Then 
\[\hat{H}:=H\Bigl\langle \begin{pmatrix}
\zeta&0\\0&\zeta^{-1}
\end{pmatrix}\Bigr\rangle\le\SL(2,q^2)\] 
is a non-split extension with Sylow $2$-subgroup $E\cong Q_{2^{d+1}}$. Thus, we can apply \autoref{examples} to the pair $(H,\hat{H})$. For $q=3$ we end up with the (unique) non-principal block of $G=\mathtt{SmallGroup}(144,124)$.

Now assume that $E\cong SD_{16}$. 
Here we need to investigate the generalized decomposition matrix $Q^z$ with respect to a $B$-subsection $(z,b_z)$ where $\Z(D)=\langle z\rangle$. The columns of $Q^z$ lie in the orthogonal complement of the $\ZZ$-module spanned by the columns of the ordinary decomposition matrix and the column $d_{.,\phi_x}^x$. It is easy to find a basis of this $\ZZ$-module. Therefore, there exists an integral matrix $S\in\GL(3,\QQ)$ such that
\[Q^z=\begin{pmatrix}
1&.&.\\
.&1&.\\
.&.&1\\
1&1&1\\
-1&-1&.\\
-1&.&-1\\
.&-1&-1
\end{pmatrix}S.\]
By \autoref{lemPhix}, $(\epsilon_1,\ldots,\epsilon_4,\mu_1,\mu_2,\mu_3)Q^z=(0,0,0)$. After we multiply both sides with $S^{-1}$, we get $\mu_1=\mu_2=\mu_3=1$ as desired. 
To construct examples, let $H:=\SL(2,q)$ for an odd prime $q$. Let $\FF_q^\times=\langle\zeta\rangle$. The conjugation with $\begin{sm}0&\zeta\\1&\zeta\end{sm}\in\GL(2,q)$ induces an automorphism $\alpha$ on $H$ of order $2$. Then $\hat{H}:=H\rtimes\langle\alpha\rangle$ has Sylow $2$-subgroup $E\cong SD_{2^{d+1}}$, so we can apply \autoref{examples}.
For $q=3$, this gives the non-principal block of $G=\mathtt{SmallGroup}(144,125)$. 

\textbf{Case~3:} $B$ is Morita equivalent to the principal block of $\SL(2,5)$.\\
Here we have $(\epsilon_1,\ldots,\epsilon_4)=(1,0,0,1)$ with the labeling of the proof of \autoref{Q8h0}. The decomposition matrix shows further that $\phi_2=\overline{\phi_3}$ and therefore $\mu_1=\mu_2=0$. As before we have
\[(1+\mu_3)\bigl(2\phi_1(1)+\phi_2(1)+\phi_3(1)\bigr)=\lambda_1(1)+\lambda_4(1)+\mu_3\psi_3(1)\ge 0\]
with equality if and only if $E\cong Q_{16}$. Hence, $\mu_3=-1$ if $E\cong Q_{16}$ and $\mu_3=1$ if $E\cong SD_{16}$.
The former case occurs for a non-principal block of $G=\mathtt{SmallGroup}(720,414)$ and the latter for a non-principal block of $G=\mathtt{SmallGroup}(720,415)$ (same construction as in Case~2 with $q=5$).
\end{proof}

If $E\notin\{Q_{16},SD_{16}\}$, then one can show that $E\in\{D,D\times C_2,D*C_4\}$. 
It is possible to obtain some further information in these cases, but ultimately we do not know the F-S indicator $\mu$ of the unique (real) character of height $1$ when $l(B)=1$ and $E\cong D\times C_2$. \autoref{conC} would imply that $\mu=-1$.

\section{Homocyclic defect groups}

Since tame blocks have metacyclic defect groups, it is reasonable to look at other classes of $2$-blocks with metacyclic defect groups. The corresponding Morita equivalence classes have been determined in \cite[Theorem~1.1]{EKKS} (combined with \cite[Theorem~8.1]{habil}). The only non-nilpotent wild blocks $B$ occur for $D\cong C_{2^n}^2$ where $n\ge 2$. In this case $B$ is Morita equivalent to $F[D\rtimes C_3]$. In particular, the Morita equivalence class is uniquely determined by $l(B)$. We determine the F-S indicators in the special case $n=2$. Again these numbers only depend on the extended defect group. 

\begin{Thm}\label{homocyc}
Let $B$ be a real $2$-block with defect pair $(D,E)$ such that $D\cong C_4^2$, $E=D$ or $E\cong\mathtt{SmallGroup}(32,\textup{id})$. Then $(k(B),l(B))\in\{(16,1),(8,3)\}$ and exactly four characters in $\Irr(B)$ are $2$-rational. The F-S indicators of characters in $\Irr(B)$ are given below, where the first four characters are $2$-rational. If $l(B)=3$, then the first three characters are irreducible modulo $2$. All cases occur.
\begin{center}
\begin{tabular}{lll}
$l(B)$&\textup{id}&F-S indicators\\\hline
$1$&$D$, $21$ $(D\times C_2)$, $24$, $33$&$1, 1, 1, 1; 0^{12}$\\
&$3$ $(C_8\times C_4)$, $4$ $(C_8\rtimes C_4)$&$1, 1, -1, -1; 0^{12}$\\
&$25$ $(D_8\times C_4)$, $31$&$1, 1, 1, 1; 1, 1, 1, 1, 0^8$\\
&$26$ $(Q_8\times C_4)$, $32$&$1, 1, 1, 1; -1, -1, -1, -1, 0^8$\\
&$12$ $(C_4\rtimes C_8)$&$1, 1, -1, -1; 1, 1,-1, -1,0^8$\\
&$35$ $(C_4\rtimes Q_8)$&$1, 1, 1, 1; 1, 1, 1, 1, (-1)^8$\\
&$11$ $(C_4\wr C_2)$&$1, 1, 0, 0; 1, 1, 0^{10} $\\
&$34$&$1^{16}$\\\hline
$3$&$D$, $21$ $(D\times C_2)$, $33$&$1,0,0,1;0,0,0,0$\\
&$11$ $(C_4\wr C_2)$&$1,1,1,1;0,0,1,1$\\
&$34$&$1,0,0,1;1,1,1,1$
\end{tabular}
\end{center}
\end{Thm}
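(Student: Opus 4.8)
The plan is to mirror \autoref{secQ}: first determine the Morita equivalence class and the decomposition matrix of $B$, then read the Frobenius--Schur indicators off the subsection identities \autoref{lemPhix} and \autoref{locnil} (treating the principal case separately), and finally realise each row of the table via \autoref{examples}.

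First I would invoke \cite[Theorem~1.1]{EKKS} together with \cite[Theorem~8.1]{habil}: since $D\cong C_4^2$, either $B$ is nilpotent, whence $l(B)=1$ and $k(B)=k(D)=16$ with decomposition matrix $(1,\dots,1)^{\TT}$, or $B$ is Morita equivalent to $F[C_4^2\rtimes C_3]$, whence $l(B)=3$; in the latter case the ordinary character theory of the Frobenius group $C_4^2\rtimes C_3$ (three linear characters inflated from $C_3$ and five characters of degree $3$ induced from the five $C_3$-orbits on $C_4^2\setminus\{1\}$) gives $k(B)=8$ and a decomposition matrix consisting of the $3\times 3$ identity block on top of five rows $(1,1,1)$. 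This yields $(k(B),l(B))\in\{(16,1),(8,3)\}$ and identifies, when $l(B)=3$, the three characters irreducible modulo $2$ as those with a standard-basis decomposition row. For the $2$-rationality count I would use that every irreducible Brauer character of $B$ has values in $\QQ(\zeta_{|G|_{2'}})$, so the Galois automorphisms defining $2$-rationality permute $\Irr(B)$ while fixing each column of the decomposition matrix; since in $FD$ exactly the four linear characters of $D$ of order $\le 2$ are $2$-rational, and in $F[C_4^2\rtimes C_3]$ exactly four characters are (the three inflated from $C_3$ and the degree-$3$ character induced from the $C_3$-orbit of linear characters of $D$ of order $2$), and the number of $2$-rational characters is shared with the Brauer first main correspondent of $B$ via the perfect isometry between them, $\Irr(B)$ has exactly four $2$-rational characters in both cases.

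For the indicators I distinguish two situations. If $E=D$, i.e.\ $B$ is principal, then $B$ is (up to the above equivalences) the principal block of $C_4^2$ or of $C_4^2\rtimes C_3$, and the indicators follow from a direct computation, using that Frobenius--Schur indicators are inflation-invariant; this already settles the $E=D$ rows in both tables. Assume now $E\neq D$. Because $D$ is abelian and the inertial quotient of $B$ acts freely on $D$, the inertial quotient of $b_x$ is trivial for every $x\in D\setminus\{1\}$, so $b_x$ is nilpotent of defect pair $(D,\C_E(x))$ and \autoref{locnil} gives
\[\sum_{\chi\in\Irr(B)}\epsilon(\chi)\,d_{\chi\phi_x}^{x}=\bigl|\{e\in E\setminus D:e^2=x\}\bigr|\qquad(x\in D\setminus\{1\}).\]
When $B$ is nilpotent, the column $(d^x_{\chi\phi_x})_{\chi}$ is, in the standard normalisation, the vector of values of the corresponding linear characters of $D$; inverting the character table of $C_4^2$ turns the displayed equations into the closed formula $\epsilon(\chi_\lambda)=|D|^{-1}\sum_{e\in E\setminus D}\overline{\lambda(e^2)}$. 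When $l(B)=3$ the columns $(d^x_{\chi\phi_x})_\chi$ are the value-vectors of the eight characters of $C_4^2\rtimes C_3$ on the respective classes, and together with the three columns at $x=1$ --- pinned down via \autoref{lemPhix} and \cite[Lemma~1.3]{MurraySubpairs} just as in the proofs of \autoref{Q8h0} and the proposition following it --- the system determines all $\epsilon(\chi)$. In both cases the indicators depend only on the function $x\mapsto|\{e\in E\setminus D:e^2=x\}|$ on $D$.

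It then remains to run through the groups $E$ listed in the two tables (the admissible list being recovered by enumerating the involutions of $\Aut(D)=\GL(2,\ZZ/4)$ and the compatible values of $t^2$, as in the description of extended defect groups of real blocks), to compute in each case the multiset $\{e^2:e\in E\setminus D\}$ --- concretely, the number of involutions in $E\setminus D$ and whether any element of $E\setminus D$ squares to an order-$4$ element of $D$ --- and to substitute into the systems above. This bookkeeping over the roughly seventeen pairs $(l(B),E)$ is routine but long, and I expect it to be the main obstacle. Finally, for the assertion that all cases occur I would apply \autoref{examples}: the $E=D$ rows are realised by $C_4^2$ and $C_4^2\rtimes C_3$; for each remaining nilpotent row take $(H,\hat H)=(D,E)$; and for each remaining $l(B)=3$ row take $H$ with $\Syl_2(H)\cong C_4^2$ and non-nilpotent principal block --- e.g.\ $H=C_4^2\rtimes C_3$ --- together with an index-$2$ overgroup $\hat H$ with $\Syl_2(\hat H)\cong E$, the existence of a suitable $\hat H$ for each of the five groups $E$ occurring there being a short direct check; alternatively one simply exhibits explicit small groups, as in \autoref{secQ}.
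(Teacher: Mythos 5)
Your plan captures the overall architecture of the paper's proof: Morita classes and decomposition matrices from \cite[Theorem~1.1]{EKKS} with \cite[Theorem~8.1]{habil}, a linear system on the F-S indicators coming from \autoref{locnil} and \autoref{lemPhix} over the $B$-subsections, and realisations via \autoref{examples}. The nilpotent case is essentially handled the way the paper does it (the paper just cites \cite[Theorem~10]{SambaleReal} directly for $E\ne D$, which is the closed formula you derive), and your observation that the inertial quotient $C_3$ acts freely on $D\setminus\{1\}$, so all non-trivial $b_x$ are nilpotent with abelian defect groups, is exactly the hypothesis needed for \autoref{locnil}. However, there are two genuine gaps in the $l(B)=3$ case.

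First and most serious, you never establish which extended defect groups $E$ can actually occur when $l(B)=3$. The phrase ``run through the groups $E$ listed in the two tables'' presupposes the very thing that makes the $l(B)=3$ table short: the theorem asserts that among the roughly ten $E$ admissible for nilpotent blocks, only $\mathrm{id}\in\{11,21,33,34\}$ (besides $E=D$) are compatible with $l(B)=3$. The paper proves this with a two-step exclusion: (a) since $\N_G(D,b_D)^*=E\N_G(D,b_D)$ must normalise a Sylow $3$-subgroup $S$ of $\Aut(D)\cong\GL(2,\ZZ/4)$ and $\N_{\Aut(S)}(S)$ has only three classes of involutions, at most four actions of $E$ on $D$ arise, killing $\mathrm{id}\in\{4,12,24,25,26,31,32\}$ outright; and (b) for $\mathrm{id}=3$ and $\mathrm{id}=35$ one counts orbits of $\N_G(D,b_D)$ on $\Irr(b_D)$, respectively $\Irr(b_1)$ for the Brauer pair $(D_1,b_1)$ with $D_1=\langle x^2:x\in D\rangle$, and finds the non-real characters cannot distribute into orbits of size $1$ and $3$ in a way consistent with the nilpotent count from part one. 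Merely enumerating involutions of $\GL(2,\ZZ/4)$ and the values $t^2$ — which is what you propose — produces the list of $E$ admissible for some real block (the $l(B)=1$ list), not for a block with $l(B)=3$, and the linear system you set up does not obviously detect the contradiction; the paper needs the subpair-orbit argument, not the $\epsilon$-system.

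Second, your assertion that the generalized decomposition columns are ``the value-vectors of the eight characters of $C_4^2\rtimes C_3$ on the respective classes'' is precisely the isotypy with $B_0(D\rtimes C_3)$, which is non-trivial here: the paper invokes \cite[Theorem~15]{SambaleC4} plus \cite[Proposition~7.3]{SambaleIso} and orthogonality relations to pin $\hat Q$ down up to three signs $\epsilon_x,\epsilon_y,\epsilon_z$. Unlike the $Q_8$ case you appeal to by analogy, the orthogonality relations alone do not fix $\hat Q$ for $C_4^2$; without the isotypy you cannot substitute into \autoref{locnil}. Finally the paper's uniqueness of the solution to the resulting linear system (up to permuting $\phi_1,\phi_2,\phi_3$) is verified by computer, which is a more delicate check than the ``routine bookkeeping'' description suggests, since the sign ambiguities in $\hat Q$ must be resolved simultaneously with the indicators.
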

\begin{proof}
Let $l(B)=1$. Then $B$ is nilpotent and the generalized decomposition matrix of $B$ coincides with the character table of $D$. This shows that $k(B)=16$ and exactly four characters are $2$-rational. The possible groups $E$ can be computed with GAP and examples can be found among the groups of order $96$ as in \autoref{examples}. If $B$ is the principal block (i.\,e. $E=D$), then  $\Irr(B)=\Irr(G/\pcore_{2'}(G))=\Irr(D)$. In this case the claim is easy to check. Otherwise, the F-S indicators are determined by \cite[Theorem~10]{SambaleReal}. We note that the embedding of $D$ in $E$ is not always unique, but the F-S indicators are independent of this embedding (in our situation). 

Now let $l(B)=3$. Suppose first that $B$ is the principal block. By a result of Brauer~\cite[Theorem~1]{BrauerApp2}, we have $\Irr(B)=\Irr(G/\pcore_{2'}(G))=\Irr(D\rtimes C_3)$. The F-S indicators can be computed easily here. Now let $E\ne D$.
We argue as in \autoref{nonexist} to exclude most candidates for $E$. Let $(D,b_D)$ be a fixed Brauer pair. By \cite[Proposition~8(i)]{SambaleReal}, the extended stabilizer has the form $\N_G(D,b_D)^*=E\N_G(D,b_D)$. It can be checked that $\N_G(D,b_D)/\C_G(D)$ is isomorphic to a Sylow $3$-subgroup $S$ of $\Aut(D)$. Moreover, the normalizer of $S$ in $\Aut(S)$ has three conjugacy classes of involutions. Hence, there are at most four possible actions of $E$ on $D$ (including the trivial action). This excludes the cases $\mathrm{id}\in\{4,12,24,25,26,31,32\}$. 
Now let $\mathrm{id}=3$, i.\,e. $E\cong C_8\times C_4$. Then $b_D$ is real and nilpotent. By the first part of the proof $b_D$ has exactly $12$ non-real characters. Under the action of $\N_G(D,b_D)$ the $16$ characters in $\Irr(b_D)$ distribute into five orbits of length $3$ and one orbit of length $1$. In particular, the number of non-real characters in $\Irr(b_D)$ cannot be $12$. Contradiction. 

Next assume that $\mathrm{id}=35$. Here, $E$ acts on $D$ by inverting its elements. In particular, $E$ centralizes $D_1:=\langle x^2:x\in D\rangle\cong C_2^2$. Fix a $B$-subpair $(D_1,b_1)$. Then $b_1$ has defect pair $(\C_D(D_1),\C_E(D_1))=(D,E)$. In particular, $b_1$ is real. Since $S$ does not centralize $D_1$, $b_1$ is nilpotent. By the first part of the proof, $b_1$ has exactly eight characters with F-S indicator $-1$. But this leads to a contradiction by considering the action of $\N_G(D,b_D)$ in $\Irr(b_1)$ as above.
This leaves the cases $\mathrm{id}\in\{11,21,33,34\}$. In all of those, $E$ splits over $D$. Hence, all F-S indicators are non-negative by \cite[Theorem~5.6]{Gow}.

By \cite{EKKS}, the decomposition matrix of $B$ is 
\[Q:=\begin{pmatrix}
1&0&0\\
0&1&0\\
0&0&1\\
1&1&1\\
1&1&1\\
1&1&1\\
1&1&1\\
1&1&1
\end{pmatrix}.\]
Up to conjugation there are five non-trivial $B$-subsections $(x,b_x)$, $(y,b_y)$, $(y^{-1},b_y)$, $(z,b_z)$ and $(z^{-1},b_z)$ where $x$ is an involution and $y,z$ have order $4$. Thus, the complex conjugation fixes exactly four columns of the generalized decomposition matrix $\hat{Q}$. By Brauer's permutation lemma, there are exactly four $2$-rational characters. We stress that the $2$-conjugate characters can still be real since $y$ might be conjugate to $y^{-1}$ via an element not fixing $b_y$. By the shape of $Q$ we may assume that the first four characters are $2$-rational. By \cite[Theorem~5.3]{Gow}, two or four of them have F-S indicator $1$. 

By \cite[Theorem~15]{SambaleC4}, $B$ is isotypic to the principal block of $H:=D\rtimes C_3$. This implies via \cite[Proposition~7.3]{SambaleIso} that $\hat{Q}$ coincides up to basic sets with the generalized decomposition matrix of $H$. However, since $l(b_x)=l(b_y)=l(b_z)=1$, the columns of $\hat{Q}$ corresponding to $x,y,z$ are uniquely determined up to signs. Using the orthogonality relations the signs can be chosen such that
\[\hat{Q}=\begin{pmatrix}
1&0&0&\epsilon_x&\epsilon_y&\epsilon_y&\epsilon_z&\epsilon_z\\
0&1&0&\epsilon_x&\epsilon_y&\epsilon_y&\epsilon_z&\epsilon_z\\
0&0&1&\epsilon_x&\epsilon_y&\epsilon_y&\epsilon_z&\epsilon_z\\
1&1&1&3\epsilon_x&-\epsilon_y&-\epsilon_y&-\epsilon_z&-\epsilon_z\\
1&1&1&-\epsilon_x&(-1+2i)\epsilon_y&(-1-2i)\epsilon_y&\epsilon_z&\epsilon_z\\
1&1&1&-\epsilon_x&(-1-2i)\epsilon_y&(-1+2i)\epsilon_z&\epsilon_z&\epsilon_z\\
1&1&1&-\epsilon_x&\epsilon_y&\epsilon_y&(-1+2i)\epsilon_z&(-1-2i)\epsilon_z\\
1&1&1&-\epsilon_x&\epsilon_y&\epsilon_y&(-1-2i)\epsilon_z&(-1+2i)\epsilon_z
\end{pmatrix}\]
where $\epsilon_x,\epsilon_y,\epsilon_z\in\{\pm1\}$ and $i=\sqrt{-1}$. Since $b_x$, $b_y$ and $b_z$ are nilpotent with abelian defect group, we can apply \autoref{locnil} for each $E$. This gives a linear system on the vector of F-S indicators. We have checked by computer that there is a unique solution (up to permuting the first three characters). 
Examples are given by $G=\mathtt{SmallGroup}(288,a)$ where $a=67,407,406,405$ for $\mathrm{id}=11,21,33,34$ respectively.
\end{proof}

It might be possible to conduct a similar analysis for defect groups $D\cong C_{2^n}^2$ with arbitrary $n\ge 2$. However, for $n=3$ there are already $27$ extended defect groups to consider.

\section*{Acknowledgment}
I thank John Murray for some helpful discussions on the open cases in \autoref{secQ}. Parts of this paper were written at the University of Valencia, from which I received great hospitality.
The work is supported by the German Research Foundation (\mbox{SA 2864/4-1}).


\begin{thebibliography}{10}

\bibitem{AKO}
M. Aschbacher, R. Kessar and B. Oliver, \textit{Fusion systems in algebra and
  topology}, London Mathematical Society Lecture Note Series, Vol. 391,
  Cambridge University Press, Cambridge, 2011.

\bibitem{BenderD}
H. Bender, \textit{Finite groups with dihedral {S}ylow {$2$}-subgroups}, J.
  Algebra \textbf{70} (1981), 216--228.

\bibitem{BenderGlauberman}
H. Bender and G. Glauberman, \textit{Characters of finite groups with dihedral
  {S}ylow {$2$}-subgroups}, J. Algebra \textbf{70} (1981), 200--215.

\bibitem{Bleher0}
F.~M. Bleher, \textit{Universal deformation rings and dihedral defect groups},
  Trans. Amer. Math. Soc. \textbf{361} (2009), 3661--3705.

\bibitem{Bleher}
F.~M. Bleher, \textit{Dihedral blocks with two simple modules}, Proc. Amer.
  Math. Soc. \textbf{138} (2010), 3467--3479.

\bibitem{Bleher2}
F.~M. Bleher, G. Llosent and J.~B. Schaefer, \textit{Universal deformation
  rings and dihedral blocks with two simple modules}, J. Algebra \textbf{345}
  (2011), 49--71.

\bibitem{BrauerApp2}
R. Brauer, \textit{Some applications of the theory of blocks of characters of
  finite groups. {II}}, J. Algebra \textbf{1} (1964), 307--334.

\bibitem{BrauerApp3}
R. Brauer, \textit{Some applications of the theory of blocks of characters of
  finite groups. {III}}, J. Algebra \textbf{3} (1966), 225--255.

\bibitem{Brauer}
R. Brauer, \textit{On {$2$}-blocks with dihedral defect groups}, in: Symposia
  {M}athematica, {V}ol. {XIII} ({C}onvegno di {G}ruppi e loro
  {R}appresentazioni, {INDAM}, {R}ome, 1972), 367--393, Academic Press, London,
  1974.

\bibitem{Cabanes}
M. Cabanes and C. Picaronny, \textit{Types of blocks with dihedral or
  quaternion defect groups}, J. Fac. Sci. Univ. Tokyo Sect. IA Math.
  \textbf{39} (1992), 141--161. Revised version:
  \url{http://www.math.jussieu.fr/~cabanes/type99.pdf}.

\bibitem{CEKL}
D.~A. Craven, C.~W. Eaton, R. Kessar and M. Linckelmann, \textit{The structure
  of blocks with a {K}lein four defect group}, Math. Z. \textbf{268} (2011),
  441--476.

\bibitem{DonovanD}
P.~W. Donovan, \textit{Dihedral defect groups}, J. Algebra \textbf{56} (1979),
  184--206.

\bibitem{EKKS}
C.~W. Eaton, R. Kessar, B. Külshammer and B. Sambale, \textit{{$2$}-blocks
  with abelian defect groups}, Adv. Math. \textbf{254} (2014), 706--735.

\bibitem{EiseleD}
F. Eisele, \textit{{$p$}-adic lifting problems and derived equivalences}, J.
  Algebra \textbf{356} (2012), 90--114.

\bibitem{ErdmannDprincipal}
K. Erdmann, \textit{Principal blocks of groups with dihedral {S}ylow
  {$2$}-subgroups}, Comm. Algebra \textbf{5} (1977), 665--694.

\bibitem{ErdmannD}
K. Erdmann, \textit{Algebras and dihedral defect groups}, Proc. London Math.
  Soc. (3) \textbf{54} (1987), 88--114.

\bibitem{Erdmann}
K. Erdmann, \textit{Blocks of tame representation type and related algebras},
  Lecture Notes in Math., Vol. 1428, Springer-Verlag, Berlin, 1990.

\bibitem{ErdmannMichler}
K. Erdmann and G.~O. Michler, \textit{Blocks with dihedral defect groups in
  solvable groups}, Math. Z. \textbf{154} (1977), 143--151.

\bibitem{GAPnew}
The GAP~Group, \textit{GAP -- Groups, Algorithms, and Programming, Version
  4.12.0}; 2022, (\url{http://www.gap-system.org}).

\bibitem{GorensteinPGLstar}
D. Gorenstein, \textit{Finite groups the centralizers of whose involutions have
  normal {$2$}-complements}, Canadian J. Math. \textbf{21} (1969), 335--357.

\bibitem{GW0}
D. Gorenstein and J.~H. Walter, \textit{On finite groups with dihedral {S}ylow
  2-subgroups}, Illinois J. Math. \textbf{6} (1962), 553--593.

\bibitem{GW}
D. Gorenstein and J.~H. Walter, \textit{The characterization of finite groups
  with dihedral {S}ylow {$2$}-subgroups. {I}}, J. Algebra \textbf{2} (1965),
  85--151.

\bibitem{GW2}
D. Gorenstein and J.~H. Walter, \textit{The characterization of finite groups
  with dihedral {S}ylow {$2$}-subgroups. {II}}, J. Algebra \textbf{2} (1965),
  218--270.

\bibitem{GW3}
D. Gorenstein and J.~H. Walter, \textit{The characterization of finite groups
  with dihedral {S}ylow {$2$}-subgroups. {III}}, J. Algebra \textbf{2} (1965),
  354--393.

\bibitem{Gow}
R. Gow, \textit{Real-valued and {$2$}-rational group characters}, J. Algebra
  \textbf{61} (1979), 388--413.

\bibitem{HolmPub}
T. Holm, \textit{Derived equivalent tame blocks}, J. Algebra \textbf{194}
  (1997), 178--200.

\bibitem{Isaacs}
I.~M. Isaacs, \textit{Character theory of finite groups}, AMS Chelsea
  Publishing, Providence, RI, 2006.

\bibitem{KoshitaniDsolvable}
S. Koshitani, \textit{A remark on blocks with dihedral defect groups in
  solvable groups}, Math. Z. \textbf{179} (1982), 401--406.

\bibitem{KoshitaniL}
S. Koshitani and C. Lassueur, \textit{Splendid {M}orita equivalences for
  principal 2-blocks with dihedral defect groups}, Math. Z. \textbf{294}
  (2020), 639--666.

\bibitem{LandrockDprincipal}
P. Landrock, \textit{The principal block of finite groups with dihedral {S}ylow
  {$2$}-subgroups}, J. Algebra \textbf{39} (1976), 410--428.

\bibitem{LinckelmannDerived}
M. Linckelmann, \textit{A derived equivalence for blocks with dihedral defect
  groups}, J. Algebra \textbf{164} (1994), 244--255.

\bibitem{LinckelmannC2C2}
M. Linckelmann, \textit{The source algebras of blocks with a {K}lein four
  defect group}, J. Algebra \textbf{167} (1994), 821--854.

\bibitem{LinckelmannBook2}
M. Linckelmann, \textit{The block theory of finite group algebras. {V}ol.
  {II}}, London Mathematical Society Student Texts, Vol. 92, Cambridge
  University Press, Cambridge, 2018.

\bibitem{Macgregor}
N. Macgregor, \textit{Morita equivalence classes of tame blocks of finite
  groups}, J. Algebra \textbf{608} (2022), 719--754.

\bibitem{MurrayCyclic2}
J. Murray, \textit{Components of the involution module in blocks with cyclic or
  {K}lein-four defect group}, J. Group Theory \textbf{11} (2008), 43--62.

\bibitem{MurraySubpairs}
J. Murray, \textit{Real subpairs and {F}robenius-{S}chur indicators of
  characters in 2-blocks}, J. Algebra \textbf{322} (2009), 489--513.

\bibitem{Navarro}
G. Navarro, \textit{Characters and blocks of finite groups}, London
  Mathematical Society Lecture Note Series, Vol. 250, Cambridge University
  Press, Cambridge, 1998.

\bibitem{SambaleReal}
B. Sambale, \textit{Real characters in nilpotent blocks}, to appear in Vietnam
  J. Math.

\bibitem{Sambale}
B. Sambale, \textit{Fusion systems on metacyclic {$2$}-groups}, Osaka J. Math.
  \textbf{49} (2012), 325--329.

\bibitem{habil}
B. Sambale, \textit{Blocks of finite groups and their invariants}, Springer
  Lecture Notes in Math., Vol. 2127, Springer-Verlag, Cham, 2014.

\bibitem{SambaleC4}
B. Sambale, \textit{Cartan matrices and {B}rauer's {$k(B)$}-{C}onjecture {IV}},
  J. Math. Soc. Japan \textbf{69} (2017), 735--754.

\bibitem{SambaleIso}
B. Sambale, \textit{Survey on perfect isometries}, Rocky Mountain J. Math.
  \textbf{50} (2020), 1517--1539.

\end{thebibliography}
\end{document}